\def\ps@pprintTitle{%
 \let\@oddhead\@empty
 \let\@evenhead\@empty
 \def\@oddfoot{}%
 \let\@evenfoot\@oddfoot}
\newtheorem{theorem}{Theorem}
\newtheorem{lemma}[theorem]{Lemma}
\newdefinition{definition}[theorem]{Definition}
\newdefinition{conjecture}[theorem]{Conjecture}
\newdefinition{example}[theorem]{Example}
\newdefinition{remark}[theorem]{Remark}
\newdefinition{note}[theorem]{Note}
\newdefinition{case}[theorem]{Case}
\newproof{proof}{proof}
\pgfplotsset{compat=newest}
\newcommand{\envelope}{(\kern1pt\Letter\kern1pt)}
\newcommand{\N}{\ensuremath{\mathbb{N}}}
\newcommand{\R}{\ensuremath{\mathbb{R}}}
\newcommand{\e}{\ensuremath{\varepsilon}}
\newcommand{\Pp}{\ensuremath{\mathbb{P}}}
\newcommand{\supp}{\ensuremath{\mathrm{supp}}}
\newcommand{\dt}[2]{\mbox{#1.\hspace{.13em}#2.}}
\newcommand{\dtt}[3]{\mbox{#1.\hspace{.13em}#2.\hspace{.13em}#3.}}
\newcommand*\cvec[1]{\begin{pmatrix}#1\end{pmatrix}}
\newcommand{\sgn}{\mathrm{sgn}}
\newcommand{\Langle}{\left\langle}
\newcommand{\Rangle}{\right\rangle}
\newcommand{\sis}{\mathrm{sing}\;\mathrm{supp}}
\newcommand{\lcm}{\mathrm{lcm}}
\newcommand{\1}{\mathbbm{1}}
\renewcommand{\(}{\left(}
\renewcommand{\)}{\right)}
\renewcommand{\[}{\left[}
\renewcommand{\]}{\right]}
\renewcommand{\S}{\mathcal{S}}
\renewcommand{\O}{\mathcal{O}}
\begin{document}

\begin{frontmatter}

\title{Higher order analysis of the geometry of singularities using the Taylorlet transform}

\author{Fink, Thomas}
\ead{thomas.fink@uni-passau.de}

\address{Universit\"at Passau}

\begin{abstract}
We consider an extension of the continuous shearlet transform which additionally uses higher order shears. This extension, called the Taylorlet transform, allows for a detection of the position, the orientation, the curvature and other higher order geometric information of singularities. Employing the novel vanishing moment conditions of higher order, $\int_\R g(t^k)t^m dt=0$, on the analyzing function, we can show that the Taylorlet transform exhibits different decay rates for decreasing scales depending on the choice of the higher order shearing variables. This enables a more robust detection of the geometric information of singularities. Furthermore, we present a construction that yields analyzing functions which fulfill vanishing moment conditions of different orders simultaneously.
\end{abstract}

\end{frontmatter}

\section{Introduction}

Edges are crucial features in image processing as they contain a considerable part of an image's visual information. Hence, their detection and analysis play a major role in computer vision and are vital in various fields of application including object recognition, image enhancement and feature extraction. One of the most important characteristics of an edge is its curvature. It is a distinctive size for contours and hence especially valuable for shape recognition \cite{PKM97,MEO11}. Furthermore, the edge curvature plays a vital role in the human visual perception - especially corners and edge points with high curvature are crucial for the eye's recognition of shapes \cite{AtAr56,blov74}.

A plethora of different methods exists for the task of edge detection. Multi-scale approaches play a special role as they offer a good noise robustness and are motivated from a continuous setting where the term of an edge finds a more general mathematical analogue in the singular support. The detection of this feature \dt ie, the distinction between regular and singular points by the decay rate of continuous multi-scale methods such as the continuous wavelet transform has been thoroughly discussed in the literature, \dt eg, for the continuous wavelet transform in \cite{mahw92}. In addition to the identification of singularities, the continuous curvelet and shearlet transform as well offer a detection of directional information, \dt ie, a resolution of the wavefront set \cite{CaDo05a,KuLa09}. When the latter is additionally endowed with second order shears
$$S_s:\R^2\to\R^2,\cvec{x_1 \\ x_2}\mapsto \cvec{x_1+s_1 x_1+s_2 x_2^2 \\ x_2},$$
the resulting bendlet transform is capable of extracting the curvature of an edge \cite{LePeSch16}. 

In this paper, we introduce the Taylorlet transform, which utilizes higher order shears like the bendlet transform, and allows for an extraction of the position, orientation, curvature and higher order geometric information of edges. It extends the bendlet transform by conditions that ensure a high decay rate for a more robust detection of the desired features. The approach is based on a modeling of a singular support as graph of a singularity function $q\in C^\infty(\R)$, \dt ie, $\sis(f)=\{x\in\R^2:\ x_1=q(x_2)\}$. The geometrical data accessible by the Taylorlet transform consists of the Taylor coefficients of $q$. In this perspective, the 2D continuous wavelet and shearlet transform essentially identify the $0^\mathrm{th}$ rsp. the $0^\mathrm{th}$ and $1^\mathrm{st}$ Taylor coefficients of the singularity function. 

For these detection properties, vanishing moment conditions for the respective analyzing function are essential. They are responsible for the ability of the continuous wavelet transform to detect singularities of high regularity \cite[Thm 3]{mahw92} and ensure the decay rate of the continuous shearlet transform for decreasing scales \cite[Thm 3.1]{gr11}. We will show in this paper that analogously
an analyzing Taylorlet $\tau(x)=g(x_1)\cdot h(x_2)$ fulfilling vanishing moment conditions of the type
$$\int_\R g(x_1^k)x_1^m dx_1 = 0\in\R\text{ for all }k\in\{1,\ldots,n\},$$
are of similar importance for the extraction of higher order geometric information.

The paper is organized as follows. In section 2 the Taylorlet transform and all basic definitions are introduced, before we describe the construction of analyzing Taylorlets of arbitrary order in section 3. In section 4 the main result is stated and explained in detail and afterwards proved in section 5. Section 6 shows some numerical examples of the Taylorlet transform. Finally, in section 7 we discuss open problems and possible extensions of the Taylorlet transform.

\section{Basic definitions and notation}

The goal of the Taylorlet transform is a precise analytical description of the singular support of the analyzed function $f$. To this end, we assume that we can represent $\sis(f)$ locally as the graph of a singularity function $q\in C^\infty(\R)$ and describe $\sis(f)$ by the Taylor coefficients of $q$. These coefficients can be found by observing the decay rate of the Taylorlet transform. In this way the continuous shearlet transform essentially delivers a local linear approximation to the singular support which can be regarded as a first order Taylor polynomial of the singularity function $q$. Hence, we will use it as a starting point for the construction of the Taylorlet transform. To this end, we need an extension of the classical shear: we will use a modification of the higher order shearing operators introduced in \cite{LePeSch16}.

\begin{definition}\label{operators}
For $n\in\N$ and $s=(s_0,\ldots,s_n)^T\in\R^{n+1}$ the $n$-th order shearing operator is defined as
$$S_s^{(n)}:\R^2\to\R^2,\quad S_s^{(n)}(x):=\cvec{x_1 + \sum_{\ell=0}^n \frac{s_\ell}{\ell !}\cdot  x_2^\ell \\ x_2}.$$
In contrast to \cite{LePeSch16}, the higher order shearing operator here also includes a simple translation along the $x_1$-axis in the form of $s_0$. This is included to emphasize the Taylor coefficient perspective on the singular support of the analyzed function.
\end{definition}
Furthermore, for $a,\alpha>0$ and we use an $\alpha$-scaling matrix \cite{LePeSch16}
$$A_a^{(\alpha)}:=\begin{pmatrix}
a & 0 \\
0 & a^\alpha
\end{pmatrix}.$$

\begin{definition}[Iterated integrals]
Let $k\in\N$, $u\in\R$ and $\phi\in L^1(\R)$. We define $I_+ \phi(u) := \int_{-\infty}^u \phi(v)\ dv$, $I_-\phi(u) := \int_u^\infty \phi(v)dv$ and $I_\pm^{k+1}\phi = I_\pm\circ I_\pm^k\phi$.
\end{definition}

A central property of analyzing functions of continuous multi-scale transforms is the vanishing moment condition which plays a crucial role for wavelets in order to detect singularities of a certain smoothness \cite{mahw92}. For shearlets there exist analogous results \cite{gr11}. Pursuing a similar goal, the following definition of analyzing Taylorlets incorporates some special vanishing moment properties.

\begin{definition}[Vanishing moments of higher order, analyzing Taylorlet, restrictiveness]
We say that a function $f:\R\to\R$ has $r$ vanishing moments of order $n$ if
$$\int_\R f\big(\pm t^k\big)t^m dt = 0$$
for all $m\in\{0, \ldots, kr-1\}$ and for all $k\in\{1,\ldots,n\}.$

Let $g,h\in\S(\R)$ such that $g$ has $r$ vanishing moments of order $n$. We call the function
$$\tau=g\otimes h$$
an analyzing Taylorlet of order $n$ with $r$ vanishing moments.

We say $\tau$ is restrictive, if additionally 
\begin{enumerate}[label=(\roman*)]
\item $I_+^j g(0)\ne 0 \quad \text{for all }j\in\{0,\ldots,r\}$ and
\item $\int_\R h(t)dt\ne 0$.
\end{enumerate}
\end{definition}

In \cref{construction} we show that for arbitrary $n\in\N$, the set of restrictive analyzing Taylorlets of order $n$ is not empty. 

We define the Taylorlet transform as follows.

\begin{definition}[Taylorlet transform]
Let $\tau\in\S(\R^2)$ be an analyzing Taylorlet. Let $n\in\N$, $\alpha>0$, $t\in\R$, $a>0$ and $s\in\R^{n+1}$. We define
$$\tau^{(n,\alpha)}_{a,s,t}(x):= a^{-(1+\alpha)/2}\cdot \tau\(A_{\frac 1a}^{(\alpha)} S_{-s}^{(n)}\cvec{x_1 \\ x_2-t}\)\quad\text{for all }x=(x_1,x_{2})\in\R^{2}.$$
Whenever the values of $\alpha$ and $n$ are clear, we will omit these indices and write $\tau_{a,s,t}$ instead. \\
The Taylorlet transform \dtt wrt $\tau$ of a tempered distribution $f\in \S'(\R^2)$ is defined as
\begin{align*}
\mathcal{T}^{(n,\alpha)}:\S'\(\R^2\)\to C^\infty\(\R_+\times\R^{n+1}\times\R\),\quad \mathcal{T}^{(n,\alpha)}f(a,s,t) = \Langle f, \tau^{(n,\alpha)}_{a,s,t}\Rangle.
\end{align*}
\end{definition}

The smoothness of the Taylorlet transform results from $\tau\in\S\(\R^2\)$ and from the smoothness of the translation, the higher-order shear and the dilation.

We want to distinguish between the right choice of shearing parameters $s=(s_0,\ldots,s_n)$ and the incorrect ones by comparing the respective decay rates of the Taylorlet transform. We will show that, if the choice is incorrect, the Taylorlet transform decays fast because of the vanishing moments of higher order. The restrictiveness on the other hand makes sure that the Taylorlet transform decays slowly at a correct choice of shearing parameters.

\section{Construction of a Taylorlet}

In the setting of the continuous shearlet transform the vanishing moment property \dtt wrt the $x_1-$direction is inherently given by the definition of a classical shearlet $\psi$ \cite{KuLa09}
$$\hat\psi(\xi) = \hat\psi_1(\xi_1)\cdot \hat\psi_2\(\frac{\xi_2}{\xi_1}\),$$
since $0\notin\supp\big(\hat\psi_1\big)$. In the Taylorlet setting, the function $g$ essentially takes over the role of $\psi_1$. Unfortunately, vanishing moments of $g$ alone are not sufficient for the construction of an analyzing Taylorlet, since we additionally need vanishing moments of $g(\pm t^k)$ for all $k\le n$. This complicates the situation as we cannot rely on a construction via the Fourier transform.

We will first present the construction procedure and exemplary images of each construction step starting from the function $\phi(t)=e^{-t^{2}}$. Later, we prove that the resulting function is a restrictive Taylorlet of order $n$ with $r$ vanishing moments. \\[3mm]

{\bf General setup}\\
\begin{enumerate}[label=\Roman*.] 
\begin{minipage}{.65\textwidth}
\item We start with an even function $\phi\in\S(\R)$ fulfilling
$$\phi^{(k)}(0)\ne 0\quad \Leftrightarrow \quad k\ \mathrm{ mod }\ 2=0.$$
This condition is necessary for the Taylorlet to be a Schwartz function. For instance, we can choose $\phi(t)=e^{-t^2}$.
\end{minipage}
\hfill
\begin{minipage}{.25\textwidth}
\begin{tikzpicture}[scale=.5]
\begin{axis}[samples=500,domain=-6:6]
\addplot[ultra thick,blue!80!black]plot (\x, {exp(-pow(\x,2))});
\addlegendentry{$\phi(t)=e^{-t^2}$};
\end{axis}
\end{tikzpicture}
\end{minipage}
\vspace{4mm}

\begin{minipage}{.65\textwidth}
\item Let $v_n\in\N$ be the least common multiple of the numbers $1,\ldots, n$. We define $$\phi_n(t):=\phi\big(t^{v_n}\big)\quad\text{for all }t\in\R.$$ This function is still in $\S(\R)$ and fulfills
$$\phi_n^{(k)}(0)\ne 0\quad \Leftrightarrow \quad k\ \mathrm{ mod }\ 2v_n=0.$$
\end{minipage}
\hfill
\begin{minipage}{.25\textwidth}
\begin{tikzpicture}[scale=.5]
\begin{axis}[samples=500,domain=-6:6]
\addplot[ultra thick,blue!80!black]plot (\x, {exp(-pow(\x,4))});
\addlegendentry{$\phi_2(t)$};
\end{axis}
\end{tikzpicture}
\end{minipage}
\vspace{4mm}

\begin{minipage}{.65\textwidth}
\item We define $$\phi_{n,r}:=\tfrac 1{(2rv_n)!}\cdot\phi_n^{(2r v_n)}.$$ This function has $2rv_n$ vanishing moments since each derivative generates one further vanishing moment. Furthermore, the function is also in $\S(\R)$ and fulfills
$$\phi_{n,r}^{(k)}(0)\ne 0\quad \Leftrightarrow \quad k\ \mathrm{ mod }\ 2v_n=0.$$
\end{minipage}
\hfill
\begin{minipage}{.25\textwidth}
\begin{tikzpicture}[scale=.5]
\begin{axis}[samples=500,domain=-6:6]
\addplot[ultra thick,blue!80!black]plot (\x, {1/630*exp(-pow(\x,4))*(315 - 51660*pow(\x,4) + 286020*pow(\x,8) - 349440*pow(\x,12) + 142464*pow(\x,16) - 21504*pow(\x,20) + 1024*pow(\x,24))});
\addlegendentry{$\phi_{2,2}(t)$};
\end{axis}
\end{tikzpicture}
\end{minipage}
\vspace{4mm}

\begin{minipage}{.65\textwidth}
\item We define $$\tilde\phi_{n,r}=\phi_{n,r}\(|t|^{\frac 1{v_n}}\)\quad\text{for all }t\in\R.$$ The concatenation with the function $|\cdot|^{\frac 1{v_n}}$ ensures that $\tilde\phi_{n,r}$ has vanishing moments of order $n$. The function $\tilde\phi_{n,r}$ fulfills
$$\tilde\phi_{n,r}^{(k)}(0)\ne 0\quad \Leftrightarrow \quad k\ \mathrm{ mod }\ 2=0.$$
So, it is smooth despite the singularity of $|\cdot|^{1/v_n}$ and in $\S(\R)$, as well.
\end{minipage}
\hfill
\begin{minipage}{.25\textwidth}
\begin{tikzpicture}[scale=.5]
\begin{axis}[samples=500,domain=-6:6]
\addplot[ultra thick,blue!80!black]plot (\x, {1/630*exp(-pow(\x,2))*(315 - 51660*pow(\x,2) + 286020*pow(\x,4) - 349440*pow(\x,6) + 142464*pow(\x,8) - 21504*pow(\x,10) + 1024*pow(\x,12))});
\addlegendentry{$\tilde\phi_{2,2}(t)$};
\end{axis}
\end{tikzpicture}
\end{minipage}
\vspace{4mm}

\begin{minipage}{.65\textwidth}
\item For all $t\in\R$, we define $$g(t):=(1+t)\tilde\phi_{n,r}(t).$$ This step guarantees that the necessary properties of $g$ for the restrictiveness are fulfilled. Furthermore, $g\in\S(\R)$.
\end{minipage}
\hfill
\begin{minipage}{.25\textwidth}
\begin{tikzpicture}[scale=.5]
\begin{axis}[samples=500,domain=-6:6]
\addplot[ultra thick,blue!80!black]plot (\x, {1/630*exp(-pow(\x,2))*(1+\x)*(315 - 51660*pow(\x,2) + 286020*pow(\x,4) - 349440*pow(\x,6) + 142464*pow(\x,8) - 21504*pow(\x,10) + 1024*pow(\x,12))});
\addlegendentry{$g(t)$};
\end{axis}
\end{tikzpicture}
\end{minipage}
\vspace{4mm}

\begin{minipage}{.65\textwidth}
\item We choose a function $h\in\S(\R)$ such that $\int_\R h(t)dt\ne 0$ and define the Taylorlet $\tau:= g\otimes h.$ Since $g,h\in\S(\R)$, we have $\tau\in \S(\R^2)$.
\end{minipage}
\end{enumerate}

\begin{figure}[h]
\centering
\includegraphics[trim={5cm 0 3cm 0},clip,angle=90,width=.3\textwidth]{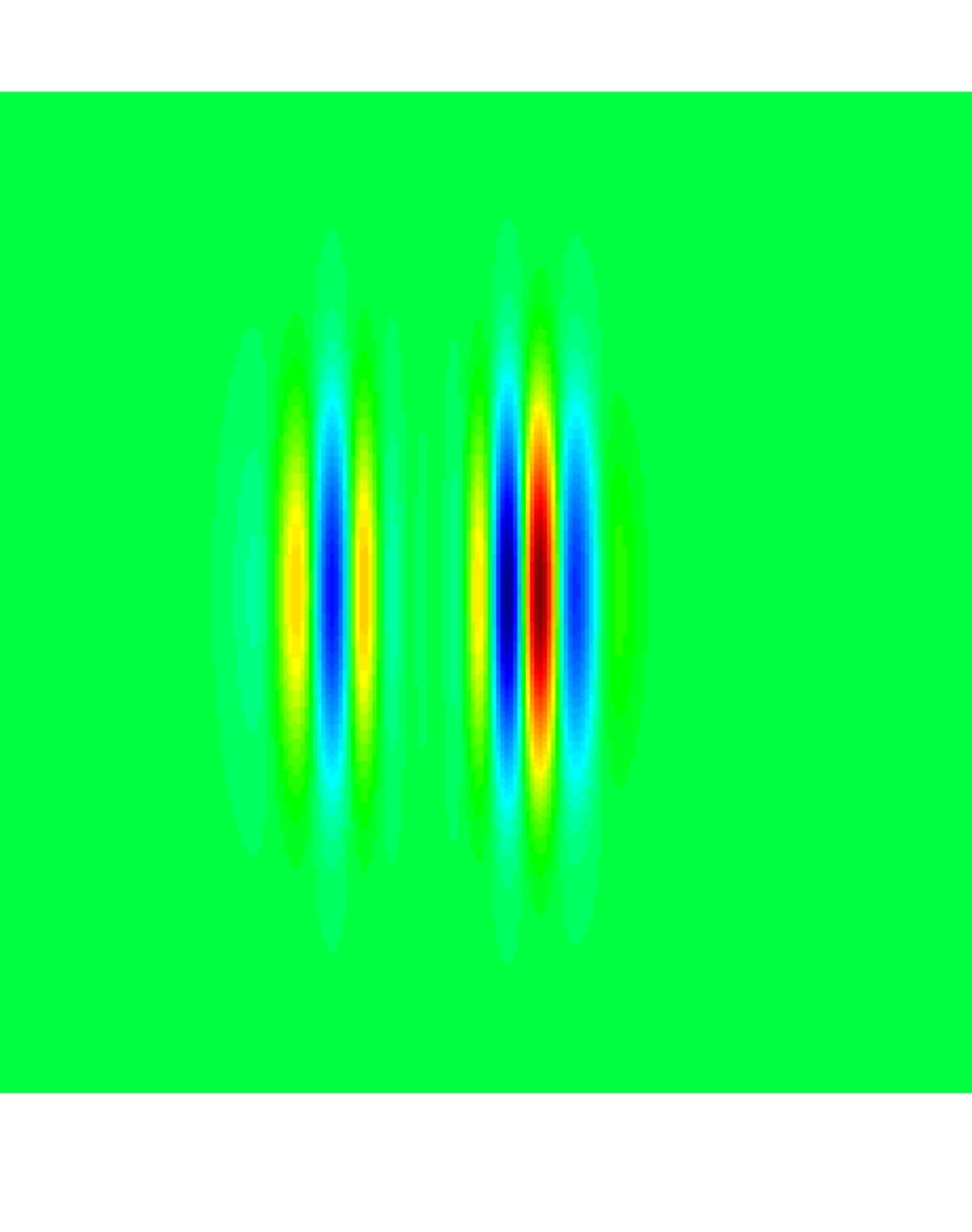}
\includegraphics[trim={5cm 0 3cm 0},clip,angle=90,width=.3\textwidth]{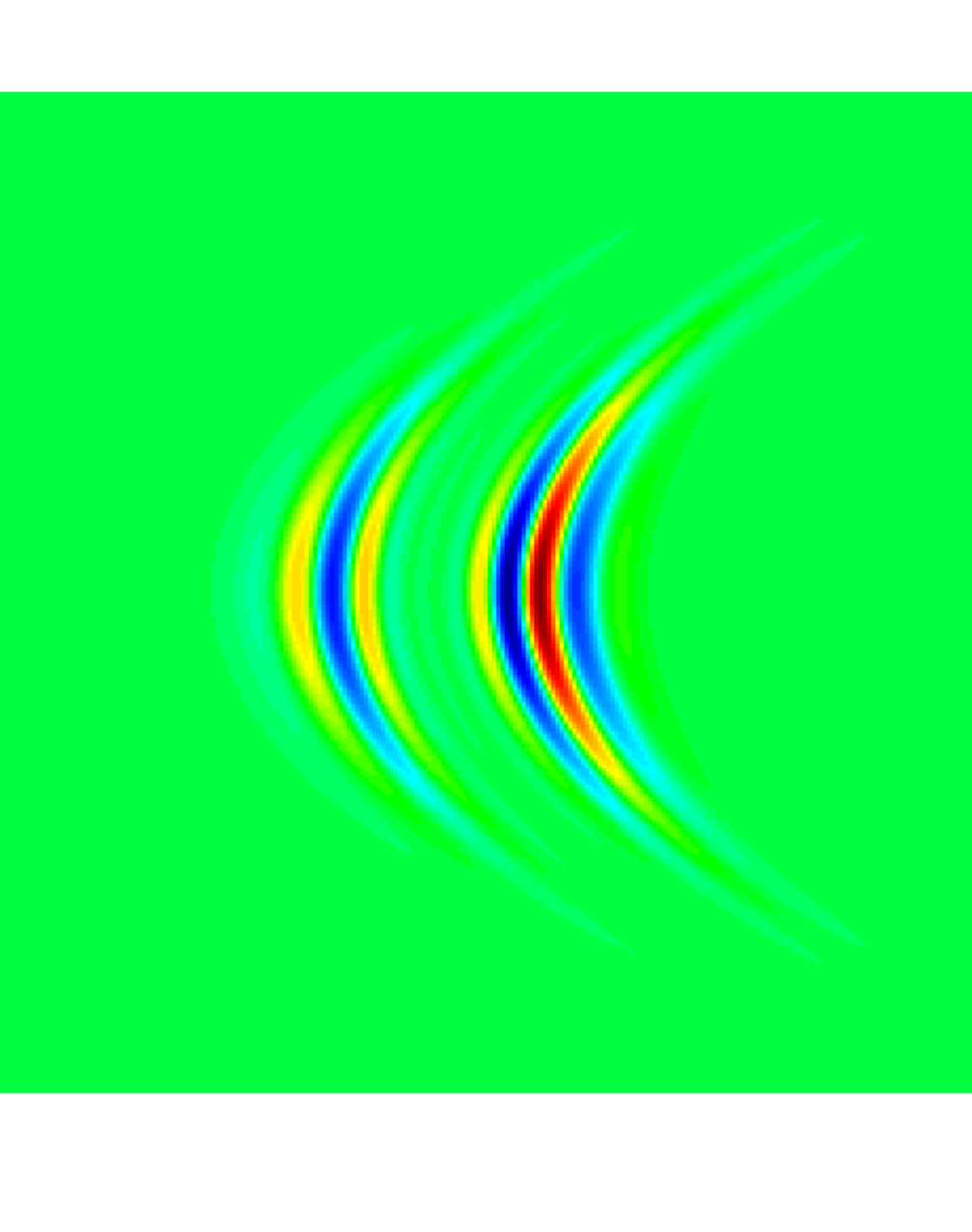}
\includegraphics[trim={5cm 0 3cm 0},clip,angle=90,width=.3\textwidth]{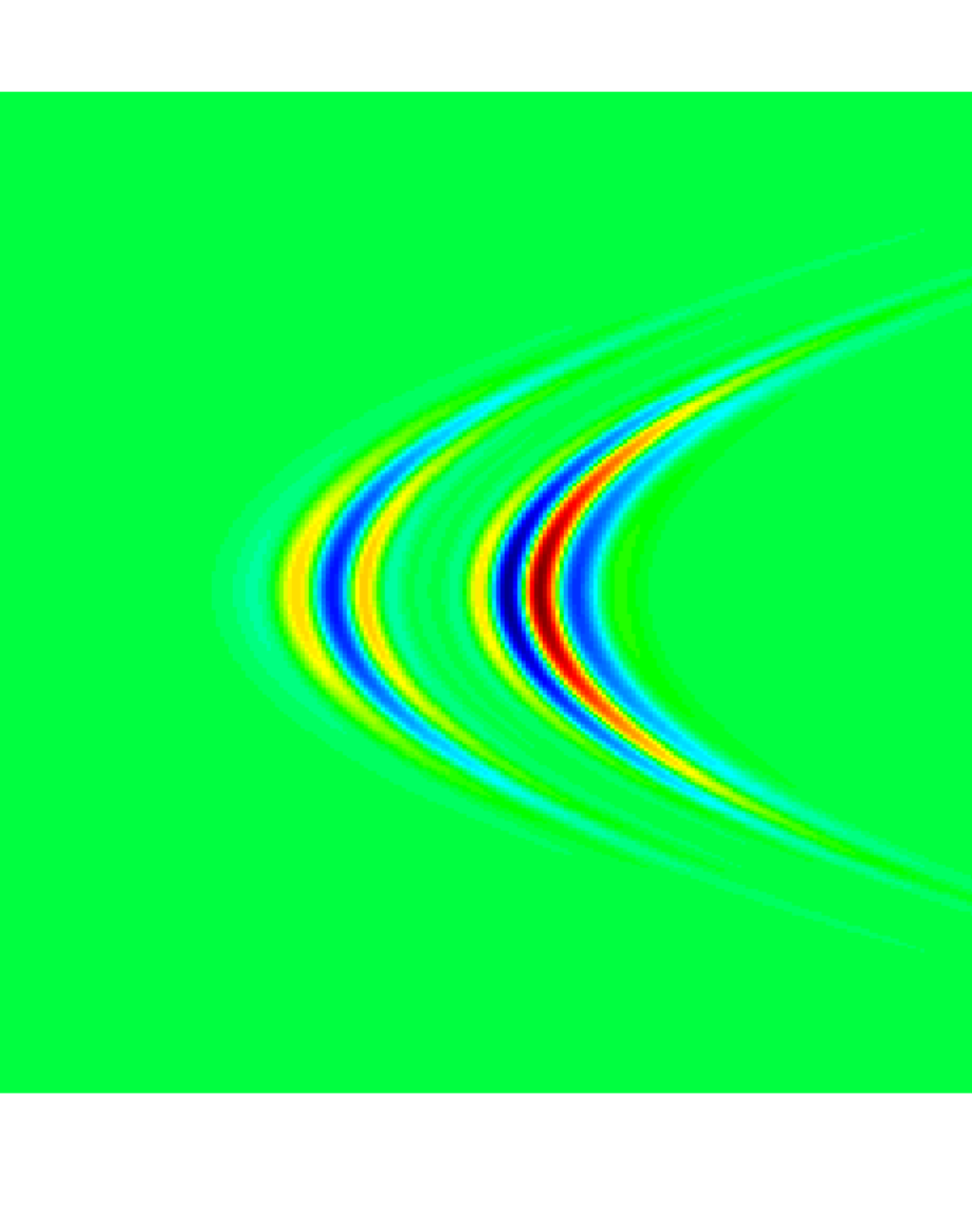}
\caption{Plots of the example Taylorlet, starting from the function $\phi(t):=e^{-t^{2}}$, $g(t):=(1+t)\cdot\tilde\phi_{2,2}(t)$, $h(t):=e^{-t^2}$, given in (\ref{example}). Increasing curvature from left to right with $s_2=0$ (left), $s_2=1$ (center), $s_2=2$ (right).}
\label{Taylorfig}
\end{figure}

In the following theorem we will prove some properties of the function $\tau$ generated by the steps I-VI above.

\begin{theorem}\label{construction}
Let $r,n\in\N$. The function $\tau$ described in the general setup exhibits the following properties:
\begin{enumerate}[label=\roman*)]
\item $\tau\in \S(\R^2)$.
\item $\tau$ is an analyzing Taylorlet of order $n$ with $2r-1$ vanishing moments.
\item $\tau$ is restrictive.
\end{enumerate}

\end{theorem}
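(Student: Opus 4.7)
The plan is to anchor everything on one structural fact: because $\phi$ is even with nonvanishing even-order Taylor coefficients at $0$ (Step I), an inspection of the series of $\phi_n(t)=\phi(t^{v_n})$ yields $\phi_n^{(k)}(0)\ne 0\iff 2v_n\mid k$, and the same vanishing pattern transfers to $\phi_{n,r}^{(k)}(0)$. Iterated integration by parts then yields the master identity
$$A_p:=\int_0^\infty\phi_{n,r}(u)\,u^{p}\,du=\frac{(-1)^{p+1}\,p!}{(2rv_n)!}\,\phi_n^{(2rv_n-p-1)}(0)\qquad(0\le p\le 2rv_n-1),$$
whence $A_p\ne 0\iff 2v_n\mid p+1$. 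All three claims reduce to this single divisibility condition.

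For (i), only $\tilde\phi_{n,r}\in\S(\R)$ requires attention, since $h\in\S(\R)$ by choice and multiplication by $1+t$ preserves Schwartz class. Rapid decay of $\tilde\phi_{n,r}(t)=\phi_{n,r}(|t|^{1/v_n})$ is immediate from rapid decay of $\phi_{n,r}$. For smoothness at $0$, the vanishing pattern lets me write $\phi_{n,r}(s)=\psi(s^{2v_n})$ for a $C^\infty$ function $\psi$ on $[0,\infty)$ extended smoothly to $\R$ (a standard consequence of Borel's lemma), whence $\tilde\phi_{n,r}(t)=\psi(t^2)$ is smooth on $\R$.

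For (ii), I would first establish that $\tilde\phi_{n,r}$ itself has $2r$ vanishing moments of order $n$. Since $\tilde\phi_{n,r}(\pm t^k)=\phi_{n,r}(|t|^{k/v_n})$ is even in $t$, odd $m$ is automatic. For even $m\in\{0,\dots,2kr-2\}$ and $k\in\{1,\dots,n\}$, the substitution $u=t^{k/v_n}$ (valid because $k\mid v_n$) rewrites the moment as a constant times $A_p$ with $p=(m+1)v_n/k-1$; the condition $A_p=0$, that is $2v_n\nmid p+1$, becomes $2k\nmid m+1$, which is automatic because $m+1$ is odd and $k\ge 1$. Expanding $g=(1+t)\tilde\phi_{n,r}$ then splits the $g$-moment as
$$\int_\R g(\pm t^k)\,t^m\,dt=\int_\R\tilde\phi_{n,r}(\pm t^k)\,t^m\,dt\pm\int_\R\tilde\phi_{n,r}(\pm t^k)\,t^{m+k}\,dt,$$
forcing the sacrifice of one order and yielding the advertised $2r-1$ vanishing moments of $g$.

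For (iii), $\int_\R h\ne 0$ is built into Step VI, and $I_+^0 g(0)=g(0)=\phi_{n,r}(0)\ne 0$ by the parity rule at $k=0$. For $j\in\{1,\dots,2r-1\}$ the substitutions $v=-u$ and $w=u^{1/v_n}$ in $I_+^j g(0)=\int_{-\infty}^0 \frac{(-v)^{j-1}}{(j-1)!}g(v)\,dv$ produce
$$I_+^j g(0)=\frac{v_n}{(j-1)!}\bigl(A_{jv_n-1}-A_{(j+1)v_n-1}\bigr).$$
Since $A_{jv_n-1}\ne 0\iff 2\mid j$ and $A_{(j+1)v_n-1}\ne 0\iff 2\mid j+1$ by the master identity, exactly one of the two summands is nonzero for every such $j$, so the difference is nonzero; this is precisely the role of the factor $1+t$ inserted in Step V, which otherwise would be swallowed by the symmetry of $\tilde\phi_{n,r}$. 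The hard part will be the parity bookkeeping converting the divisibility $2v_n\mid p+1$ into clean conditions on $m$, $k$, and $j$ simultaneously; once that dictionary is laid out, the rest reduces to the master identity and routine Schwartz and symmetry arguments.
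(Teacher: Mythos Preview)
Your argument is correct and shares the paper's skeleton, but your packaging into the single master identity $A_p=\frac{(-1)^{p+1}p!}{(2rv_n)!}\,\phi_n^{(2rv_n-p-1)}(0)$ is a genuine consolidation. The paper treats (ii) and (iii) by separate mechanisms: it obtains the $2rv_n$ vanishing moments of $\phi_{n,r}$ via the Fourier transform (differentiation becomes multiplication by $\omega$, so $\widehat{\phi_{n,r}}$ has a zero of order $2rv_n$ at the origin), deduces the higher-order vanishing moments of $\tilde\phi_{n,r}$ and then of $g$ by substitution, and only in (iii) carries out the integration by parts that reduces $\int_0^\infty\tilde\phi_{n,r}(t)\,t^{2m+1}\,dt$ to a nonzero multiple of $\phi_n^{(2v_n(r-m-1))}(0)$. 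You instead push the half-line integration by parts to the front and let it do double duty, reading off both the vanishing moments (after splitting full-line integrals by parity) and the restrictiveness from the single divisibility criterion $A_p\ne 0\iff 2v_n\mid p+1$. This buys a more unified proof at the cost of the parity bookkeeping you flag; the paper's Fourier route gives the full-line vanishing moments of $\phi_{n,r}$ in one stroke without any parity split. One caveat on (i): Borel's lemma is not the right citation for the global factorization $\phi_{n,r}(s)=\psi(s^{2v_n})$---Borel only realizes a prescribed formal series as the Taylor series of \emph{some} smooth function and does not factor a given one. The paper argues more directly that the Taylor expansion of $\phi_{n,r}$ at $0$ contains only powers $s^{2v_n j}$, so $\tilde\phi_{n,r}$ admits a polynomial approximation in $t^2$ to every order and is therefore smooth at the origin; your flat-remainder decomposition can be completed, but it needs the extra observation that a function flat at $0$ remains smooth after composition with $|t|^{1/v_n}$, which is not part of Borel's statement.
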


\begin{proof}

~\\[3mm]
{\it i)} Since $\tau= g\otimes h$ and due to the general setup $h\in\S(\R)$, we only need to prove that $g\in\S(\R)$. To this end, we show that the Schwartz properties are consecutively passed on to the next function through every step of the general setup. 

First, we observe that with the change of the argument $\phi_n:=\phi\big((\cdot)^{v_n}\big)$ in step II the Schwartz properties of $\phi$ remain. Furthermore, we obtain with the condition from step I that 
$$\phi_n^{(k)}(0)\ne 0\quad \Leftrightarrow \quad k\bmod 2v_n=0.$$
This property is invariant under the action of step III, \dt ie, 
$$0\ne\phi_{n,r}^{(k)}(0)=\phi_n^{(2rv_n+k)}(0)\quad \Leftrightarrow \quad k\bmod 2v_n=0$$
 for all $r\in\N$. After step IV the function $\tilde\phi_{n,r}:=\phi_{n,r}\(|\cdot|^{\frac 1{v_n}}\)$ is clearly smooth on every set not containing the origin. In order to show the smoothness of $\tilde\phi_{n,r}$ in the origin, we use Taylor's theorem to approximate $\phi_{n,r}$ by a Taylor polynomial. We thus obtain that
$$\phi_{n,r}(t)=\sum_{k=0}^K \frac{\phi_{n,r}^{(2kv_n)}(0)}{(2kv_n)!}\cdot t^{2kv_n}+o\(t^{2Kv_n}\)\quad\text{for }t\to 0.$$
Hence, we can approximate $\tilde\phi_{n,r}$ by a sequence of polynomials, as well.
$$\tilde\phi_{n,r}(t)=\phi_{n,r}\(|t|^{\frac 1{v_n}}\)=\sum_{k=0}^K \frac{\phi_{n,r}^{(2kv_n)}(0)}{(2kv_n)!}\cdot t^{2k}+o\(t^{2K}\)\quad\text{for }t\to 0.$$
Consequently, $\tilde\phi_{n,r}$ is smooth and inherits the Schwartzian decay property of $\phi_{n,r}$. Hence, $\tilde\phi_{n,r}\in\S(\R)$. In the last step we get that
$$g(t):=(1+t)\cdot \tilde\phi_{n,r}(t)$$
is a Schwartz function.

~\\[3mm]
{\it ii)} We will prove this statement in three steps. First, we will show that $\phi_{n,r}$ has $2rv_n$ vanishing moments. In a second step we will prove that $\tilde\phi_{n,r}$ has $2r$ vanishing moments of order $n$ and in the last part, we show that $g$ has $2r-1$ vanishing moments of order $n$. \\[3mm]
{\sc Step 1}

As shown in the proof of $i)$, $\phi_n,\phi_{n,r}\in\S(\R)$. Hence, their Fourier transforms exist and we obtain
$$\widehat{\phi_{n,r}}(\omega)=\(\phi_n^{(2rv_n)}\)^\land(\omega)=(-1)^{rv_n}\omega^{2rv_n}\widehat{\phi_n}(\omega).$$
Consequently, $\widehat{\phi_{n,r}}$ has a root of order at least $2rv_n$ in the origin and hence $\phi_{n,r}$ has at least $2rv_n$ vanishing moments. \\[3mm]
{\sc Step 2}

We now prove that $\tilde\phi_{n,r}:=\phi_{n,r}\(|\cdot|^{1/v_n}\)$ has $r$ vanishing moments of order $n$. Now let $k\in\{1,\ldots,n\}$. Then
\begin{align*}
\int_{\R} \tilde\phi_{n,r}\big(t^k\big) t^m dt &= \int_{\R} \phi_{n,r}\big(|t|^{k/v_n}\big) t^m dt \\
&= \int_{\R} \phi_{n,r}(|u|) u^{m v_n/k}\cdot\frac{v_n}{k} u^{v_n/k-1}du \\
&= \frac{v_n}k\cdot \int_{\R} \phi_{n,r}(u) u^{(m+1) v_n/k-1} du.
\end{align*}
Since $v_n$ is the least common multiple, $v_n/k\in\N$ and thus the upper expression vanishes for all $m< 2k\cdot r$. \dt ie, $\tilde\phi_{n,r}$ has $2r$ vanishing moments of order $n$. \\[3mm]
{\sc Step 3}

Now we will show that $g$ has $2r-1$ vanishing moments of order $n$.
\begin{align*}
\int_\R g\(t^k\)t^m dt &= \int_\R (1+t^k)\tilde\phi_{n,r}(t^k)t^m dt \\
&= \int_\R \tilde\phi_{n,r}(t^k)t^m dt + \int_\R \tilde\phi_{n,r}(t^k)t^{k+m} dt.
\end{align*}
Due to the result of {\sc step 2}, this expression vanishes if $m+k<2k\cdot r$. Hence, $g$ has $2r-1$ vanishing moments of order $n$ and $\tau$ is an analyzing Taylorlet of order $n$ with $2r-1$ vanishing moments. \\[3mm]
{\it iii)} In order to prove that $\tau= g\otimes h$ is restrictive, it is sufficient to show that 
$$I^j_+ g(0)\ne 0\quad\text{ for all }j\in\{0,\ldots,2r-1\}$$
since $\int_\R h(t)dt\ne 0$ is already given in step VI of the general setup. This property will be shown in two steps. First we will prove the sufficiency of
$$\int_0^\infty\tilde\phi_{n,r}(t)t^{2m+1}dt\ne 0$$
for all $m\in\{0,\ldots,r-1\}$. Afterwards we will reduce this property to the already proven property that 
$$\phi_{n,r}^{(k)}(0)\ne 0\quad \Leftrightarrow \quad k\bmod 2v_n=0.$$
{\sc Step 1}

\begin{align*}
I_-^j g(0) &= \int_0^\infty g(t)t^{j-1}dt \\
&= \int_0^\infty \tilde\phi_{n,r}(t)(1+t)t^{j-1} dt \\
&= \int_0^\infty \tilde\phi_{n,r}(t)t^{j-1} dt + \int_0^\infty \tilde\phi_{n,r}(t)t^{j} dt.
\end{align*}
Since $\tilde\phi_{n,r}$ is an even function with $2r$ vanishing moments, we obtain for $k<r$ that
$$\int_0^\infty \tilde\phi_{n,r}(t) t^{2k} dt = \frac 12 \int_\R \tilde\phi_{n,r}(t) t^{2k} dt = 0.$$
Hence, we can conclude for the iterated integral of $g$ that
$$I_-^j g(0) = \begin{cases}
\int_0^\infty \tilde\phi_{n,r}(t) t^{j-1} dt, &\text{if }j\bmod 2 = 0, \\
\int_0^\infty \tilde\phi_{n,r}(t) t^{j} dt, &\text{if }j\bmod 2 = 1.
 \end{cases}$$
Since $g$ has $2r-1$ vanishing moments, the statements $I_+ ^j g(0)\ne 0$ and $I_-^j g(0)\ne 0$ are equivalent. Consequently, we obtain that $I_+^j g(0)\ne 0$ for all $j\in\{0,\ldots,2r-1\}$ is equivalent to 
$$\int_0^\infty \tilde\phi_{n,r}(t)t^{2m+1}dt\ne 0\quad\text{for all }m\in\left\{0,\ldots,r-1\right\}.$$
{\sc Step 2}

\begin{align*}
\int_0^\infty\tilde\phi_{n,r}(t)t^{2m+1}dt &\stackrel{\phantom{part. int.}}{=} \int_0^\infty \phi_{n,r}\(t^{\frac 1{v_n}}\)t^{2m+1}dt \\
&\hspace*{1.45mm} \stackrel{t=u^{v_n}}{=} \ v_n\cdot \int_0^\infty \phi_{n,r}(u)u^{v_n(2m+1)}\cdot u^{v_n-1}du  \\
&\stackrel{\phantom{part. int.}}{=} v_n\cdot \int_0^\infty \phi_{n,r}(u)u^{2v_n(m+1)-1}du  \\
&\stackrel{\phantom{part. int.}}{=} v_n\cdot \int_0^\infty \phi_{n}^{(2rv_n)}(u)u^{2v_n(m+1)-1}du \\
&\stackrel{\text{part. int.}}{=} [2v_n(m+1)-1]!\cdot v_n\cdot \int_0^\infty \phi_n^{(2v_n[r-m-1]+1)}(u)du \\
&\stackrel{\phantom{part. int.}}{=} -[2v_n(m+1)-1]!\cdot v_n\cdot \phi_n^{(2v_n[r-m-1])}(0).
\end{align*}
The last expression does not vanish for any $m\in\{0,\ldots,r-1\}$ because $\phi_n^{k}(0)\ne 0\Leftrightarrow k\bmod 2v_n=0$. Hence,
$$\int_0^\infty\tilde\phi_{n,r}(t)t^{2m+1}dt \ne 0\quad\text{for all }m\in\{0,\ldots,r-1\}.$$
Due to {\sc step 1} we can conclude that $I_+^j g(0)\ne 0$ for all $j\in\{0,\ldots,r-1\}.$
\flushright{$\Box$} \end{proof}

\begin{remark}

The sequence $v_n=\lcm\{1,\ldots,n\}$ is innately connected to the second Chebyshev function which plays a crucial role for the prime number theorem. The second Chebyshev function is defined as
$$\psi(x)=\sum_{p\in\Pp,k\in\N:\ p^k \le x}\log p.$$
Its relation to the sequence $v_n$ is given by the equation $v_n=e^{\psi(n)}$. The second Chebyshev function $\psi$ itself is connected to the prime number theorem. It states that the prime counting function $\pi(x)=|\{p\in\Pp:\ p\le x\}|$ exhibits the asymptotics 
$$\lim_{x\to\infty}\frac{\pi(x)\log x}x = 1.$$
This statement can be proven via a relation to $\psi$, since it can be shown \cite[Theorem 4.4]{Ap76} that it is equivalent to $\psi$ having the asymptotic behavior
\begin{equation}\label{psi} \lim_{x\to\infty}\frac{\psi(x)}x = 1.\end{equation}
This property is easier to show and can be proven by a relation to the Riemann Zeta function. On the other hand, (\ref{psi}) also provides the asymptotics for $v_n$:
$$\lim_{n\to\infty}\frac{\log v_n} n=1\text{ for }n\to\infty.$$

\end{remark}

\section{Main result}

We introduce the class of functions that we consider in our main result.

\begin{definition}[Feasible function, singularity function]
Let $j\in\N$, $q\in C^\infty(\R)$ and let
$$f(x):=I^j_\pm\delta(x_1-q(x_2)).$$
Then $f$ is called a $j$-feasible function with singularity function $q$.
\end{definition}

The variable $j$ describes the smoothness of $f$. In terms of Sobolev spaces we obtain for $j\ge 1$, that $f\in W^{j-1,\infty}(\R^2)$. For instance, by choosing $q(x_2)= x_2^2$ for all $x_2\in\R$ and $j\ge 1$, we obtain the function
$$f(x)= \frac {(x_1-x_2^2)^{j-1}}{(j-1)!} \cdot H\(\pm \big(x_1- x_2^2\big)\),$$
where $H:\R\to\R, t\mapsto \1_{\R^+}(t)$ is the Heaviside step function.

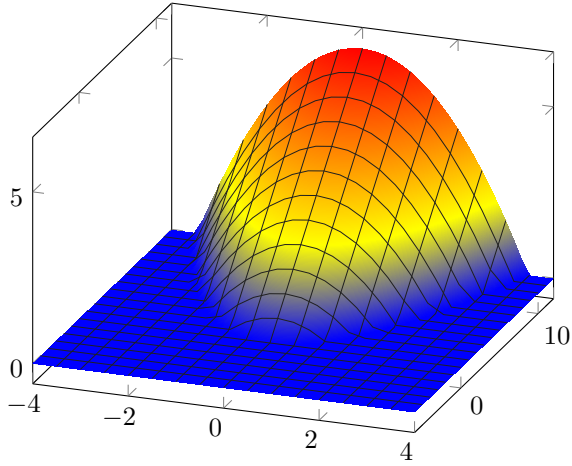
\begin{figure}[H]
\begin{center}
\begin{tikzpicture}
	\begin{axis}[view/h=20]
	\addplot3[
		surf,		
		shader=interp,
		samples=50,
		domain=-4:4,y domain=-6:12] 
		{( abs( y - x^2 ) + ( y - x^2 ) ) /4};
		\foreach \xx in {0,.5,1,1.5,2,2.5,3,3.5}
      {
        \addplot3[domain=-6:12, line width=0.05mm, mark=none, color=black!75!gray, samples y=0]
         ({\xx}, {x},{( abs( x - \xx^2 ) + ( x - \xx^2 ) )/4});
      }
      
      		\foreach \xx in {.5,1,1.5,2,2.5,3,3.5}
      {
        \addplot3[domain=-6:12, line width=0.05mm, mark=none, color=black!75!gray, samples y=0]
         ({-\xx}, {x},{( abs( x - \xx^2 ) + ( x - \xx^2 ) )/4});
      }

      \foreach \yy in {-5,-4,-3,-2,-1,0,1,2,3,4,5,6,7,8,9,10,11}
      {
         \addplot3[domain=-4:4, line width=0.05mm, mark=none, color=black!75!gray, samples y=0]
         ({x}, {\yy}, {( abs( \yy - x^2 ) + ( \yy - x^2 ) )/4});
      }
	\end{axis}
\end{tikzpicture}
\caption{Plot of the 2-feasible function $f(x)= (x_1-x_2^2)\cdot H(x_1-x_2^2)$}
\end{center}
\end{figure}
In order to classify the shearing variables \dtt wrt the local properties of the singularity function $q$, we introduce the concept of the highest approximation order.

\begin{definition}[Highest approximation order]
Let $j,n\in\N$ and let $f$ be a $j-$feasible function with the singularity function $q$. Furthermore, let $\alpha>0$, $t\in\R$, $a>0$ and $k\in\{0,\ldots,n-1\}$. If $s_\ell=q^{(\ell)}(t)$ for all $\ell\in\{0,\ldots,k\}$ and $s_{k+1}\ne q^{(k+1)}(t)$, we say that $k$ is the highest approximation order of the shearing variable $s=(s_0,\ldots,s_n)$ for $f$ in $t$.
\end{definition}

The following theorem states the main result of this article and treats the classification of the Taylorlet transform's decay \dtt wrt the highest approximation order.

\begin{theorem} \label{main}
Let $r,n\in\N$ and let $\tau$ be an analyzing Taylorlet of order $n$ with $r$ vanishing moments. Let furthermore $j< r$, $t\in\R$ and let $f$ be a $j-$feasible function.
\begin{enumerate}

\item Let $\alpha>0$. If $s_0\ne q(t)$, the Taylorlet transform has a decay of
$$\mathcal{T}^{(n,\alpha)}f(a,s,t)=\O\big(a^N\big)\quad\text{for }a\to 0$$
for all $N>0$.

\item Let $\alpha<\frac 1n$ and let $k\in\{0,\ldots,n-1\}$ be the highest approximation order of $s$ for $f$ in $t$. Then the Taylorlet transform has the decay property
$$\mathcal{T}^{(n,\alpha)}f(a,s,t)=\O\(a^{j+(\alpha-1)/2+(r-j)[1-(k+1)\alpha]}\)\quad\text{for }a\to 0.$$

\item Let $\alpha>\frac 1{n+1}$ and let $\tau$ be restrictive. If $n$ is the highest approximation order of $s$ for $f$ in $t$, then the Taylorlet transform has the decay property
$$\mathcal{T}^{(n,\alpha)}f(a,s,t)\sim a^{j+(\alpha-1)/2}\quad\text{for }a\to 0.$$
\end{enumerate}

\end{theorem}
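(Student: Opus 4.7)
The plan is to derive a single explicit integral formula for $\mathcal{T}^{(n,\alpha)}f(a,s,t)$ and then attack the three cases by scaling substitutions together with the vanishing-moment machinery.

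First I would unfold the pairing. Starting from $f=I_+^j\delta(x_1-q(x_2))=\tfrac{(x_1-q(x_2))_+^{j-1}}{(j-1)!}$, I apply in order the translations $x_1\mapsto x_1+q(x_2)$, then the dilations $x_1\mapsto ax_1$, $x_2\mapsto a^\alpha x_2+t$, absorbing the exponents of $a$ along the way. The $x_1$-integral $\int_0^\infty x_1^{j-1} g(x_1+u)\,dx_1$ becomes, after $j$ integrations by parts (allowed because $g\in\S(\R)$), exactly $(j-1)!\,I_-^j g(u)$. This yields the working formula
\begin{equation*}
\mathcal{T}^{(n,\alpha)}f(a,s,t)=a^{j+(\alpha-1)/2}\int_\R I_-^j g\!\left(\tfrac{p(a^\alpha x_2+t)}{a}\right)h(x_2)\,dx_2,
\end{equation*}
where $p(y):=q(y)-\sum_{\ell=0}^n\tfrac{s_\ell}{\ell!}(y-t)^\ell$ is the approximation error. (The $I_-^j\delta$-case is handled symmetrically with $I_+^j g$.)

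For part~1, $p(t)=q(t)-s_0\neq0$. Splitting the $x_2$-integration into $|x_2|\le a^{-\alpha/2}$ and its complement, on the first region $|p(a^\alpha x_2+t)/a|\gtrsim a^{-1}$ so the Schwartz decay of $I_-^j g$ beats every polynomial in $a$, while on the second region the Schwartz decay of $h$ does the job. For part~3, the matching of all derivatives gives $p(y)=O((y-t)^{n+1})$, hence $p(a^\alpha x_2+t)/a=O(a^{\alpha(n+1)-1})\to 0$ (using $\alpha>\tfrac{1}{n+1}$) uniformly on polynomial regions in $x_2$; dominated convergence then produces $\int I_-^j g(0)\,h(x_2)\,dx_2=I_-^j g(0)\cdot\int_\R h$. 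Restrictiveness gives $I_+^j g(0)\neq 0$, and since $j<r$ the vanishing-moment identities force $I_-^j g(0)=(-1)^j I_+^j g(0)\neq0$, so the limit is nonzero and the prefactor $a^{j+(\alpha-1)/2}$ gives the exact asymptotic order.

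The heart of the proof is part~2, and this is the step I expect to be the main obstacle. Since $s_\ell=q^{(\ell)}(t)$ for $\ell\le k$ but $s_{k+1}\neq q^{(k+1)}(t)$, Taylor expansion gives $p(y)=c\,(y-t)^{k+1}+O((y-t)^{k+2})$ with $c\neq0$. I perform the substitution $x_2=a^\gamma y$ with $\gamma:=\tfrac{1}{k+1}-\alpha>0$ (positive because $\alpha<\tfrac1n\le\tfrac{1}{k+1}$), which converts the argument of $I_-^j g$ into $cy^{k+1}+O(a^{1/(k+1)})$, i.e.\ a bounded quantity plus a controllable correction, and produces a prefactor $a^\gamma$. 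The integral then becomes
\begin{equation*}
a^\gamma\int_\R I_-^j g\bigl(cy^{k+1}+O(a^{1/(k+1)})\bigr)\,h(a^\gamma y)\,dy.
\end{equation*}
Now the decisive algebraic lemma: iterating integration by parts $j$ times using $(I_-^j g)'=-I_-^{j-1}g$ and the chain rule reduces
\begin{equation*}
\int_\R I_-^j g(cy^{k+1})\,y^m\,dy \;\propto\; \int_\R g(cy^{k+1})\,y^{m+j(k+1)}\,dy,
\end{equation*}
which vanishes whenever $m+j(k+1)<(k+1)r$, i.e.\ for $m\in\{0,\ldots,(k+1)(r-j)-1\}$, by the order-$(k+1)$ vanishing moments of $g$. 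Taylor expanding $h(a^\gamma y)$ in $a$ and the perturbation of $I_-^j g$ in $a^{1/(k+1)}$, all terms up to $m=(k+1)(r-j)-1$ annihilate, giving an overall gain of $a^{(r-j)(k+1)\gamma}=a^{(r-j)[1-(k+1)\alpha]}$. The technical difficulty is uniformly controlling the Schwartz remainders of both Taylor expansions so that the polynomially growing factors $y^m$ do not spoil the $a$-estimate; this is done by observing that the unperturbed integrand $I_-^j g(cy^{k+1})y^m\,h(a^\gamma y)$ is dominated by a fixed Schwartz function in $y$ uniformly in $a\in(0,1]$.
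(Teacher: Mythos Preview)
Your reduction of the $j$-feasible case to the $0$-feasible case via $j$-fold integration by parts, landing on the formula
\[
\mathcal{T}^{(n,\alpha)}f(a,s,t)=a^{\,j+(\alpha-1)/2}\int_\R I_-^j g\!\left(\tfrac{p(a^\alpha x_2+t)}{a}\right)h(x_2)\,dx_2,
\]
is exactly what the paper does (it first proves a lemma for $0$-feasible functions and then shows that $I_\pm^j g$ again has $r-j$ vanishing moments of order $n$ and satisfies the restrictiveness inequality). Parts~1 and~3 of your argument likewise match the paper's: split-and-Schwartz for part~1, dominated convergence plus restrictiveness for part~3.

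For part~2 you take a genuinely different route. The paper truncates to $|x_2|\le a^\beta$, Taylor-expands $\tau$ in its first argument about $\tilde q(0)x_2^{k+1}/a$ and then $\tilde q$ about $0$, substitutes $x_2=a^\alpha v$, and invokes a separately proved Fourier-based lemma giving $\int_\R\partial_1^m\tau(zt^{k+1},t)\,t^{(k+1)m+\ell}\,dt=\O(|z|^{-(m+r)})$ with $z=\tilde q(0)a^{(k+1)\alpha-1}\to\infty$. You instead substitute $x_2=a^\gamma y$ with $\gamma=\tfrac1{k+1}-\alpha$ so that the argument of $G:=I_-^j g$ becomes $cy^{k+1}+\text{(small)}$, and then expand $h(a^\gamma y)$ and the perturbation, feeding the resulting moments $\int G(cy^{k+1})y^m\,dy$ directly into the order-$(k+1)$ vanishing-moment condition via your integration-by-parts identity. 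This is more elementary (no Fourier transform) and gives the same exponent.

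There is, however, a real gap in your remainder control. Your closing sentence only dominates the \emph{unperturbed} integrand $G(cy^{k+1})y^m h(a^\gamma y)$ by a fixed Schwartz function. After you Taylor-expand $h$ away, you are left with $G\big(cy^{k+1}+R_a(y)\big)\,y^m$, and this is \emph{not} uniformly controlled: the perturbation $R_a(y)$ carries a factor $y^{k+2}$ (and higher), so for $|y|\gtrsim a^{-1/(k+1)}$ the argument $cy^{k+1}+R_a(y)=p(a^{1/(k+1)}y+t)/a$ is governed by the global behavior of $q$ and need not be large. A preliminary truncation to $|y|\le \epsilon_0\,a^{-1/(k+1)}$ (equivalently $|x_2|\le \epsilon_0 a^{\alpha}$ in the original variable, handled by the Schwartz tail of $h$) is needed \emph{before} the double Taylor expansion; on that region one has $|cy^{k+1}+R_a(y)|\ge\tfrac{|c|}{2}|y|^{k+1}$, which restores the uniform Schwartz domination. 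This is precisely the role played by the paper's cutoff to $[-a^\beta,a^\beta]$, and once inserted your argument goes through.
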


\begin{remark}

At this point we want to highlight the importance of the restrictiveness for the Taylorlet transform. This property makes sure that a Taylorlet transform of order $n$ decays slowly if the highest approximation order is $n$. If a Taylorlet lacks the restrictiveness, we can construct an example function whose Taylorlet transform is equal to zero for all $a>0$ if the highest approximation order is $n$.

Let $\tau=g\otimes h$ be a Taylorlet of order $n$ with $r$ vanishing moments such that there is a $k\in\N$ with 
$$\int_0^\infty g(t)t^k dt = 0.$$ 
Furthermore let $\alpha\in\(\frac 1{n+1},\frac 1n\)$ and
$$f(x):=x_1^k\1_{\R_+}(x_1).$$
Then we obtain for the Taylorlet transform of $f$ that
\begin{align*}
\mathcal{T}_\tau^{(n,\alpha)}f(a,0,0) &= a^{-(1+\alpha)/2}\cdot\int_{\R^2}f(x)\cdot\tau\cvec{x_1/a \\ x_2/a^\alpha}dx \\
&= a^{(1+\alpha)/2}\cdot \int_\R (ay_1)^k \1_{\R_+}(y_1)\cdot g(y_1) dy_1 \cdot \int_\R h(y_2)dy_2 \\
&= a^{k+(1+\alpha)/2} \cdot \underbrace{\int_0^\infty y_1^k\cdot g(y_1) dy_1}_{=0} \cdot \int_\R h(y_2)dy_2 = 0.
\end{align*}

\end{remark}

\begin{remark}

Furthermore, we want to emphasize the significance of the choice of $\alpha$ for the Taylorlet transform.

As the general setup involves the least common multiple $v_n$ of the numbers $1,\ldots, n$, it is possible that the order of the Taylorlet is higher than originally intended. For instance, if $\tau$ is an analyzing Taylorlet of order $5$ built after the general setup, we have $v_5=v_6=60$. To this end, Theorem \ref{construction} states that $\tau$ is also an analyzing Taylorlet of order $6$. 

The problems that arise from a wrong choice of $\alpha$ become clear when we consider a case where $\alpha<\frac 16$, $f$ is a $j-$feasible function and $\tau$ is the analyzing Taylorlet of order 5 (and 6) described above. If the highest approximation order of $s\in\R^6$ is $5$, we can treat the Taylorlet transform $\mathcal{T}^{5,\alpha}f(a,s,t)$ like $\mathcal{T}^{6,\alpha}f(a,\sigma,t)$, where $\sigma=(s_0,\ldots,s_5,0)$. We are allowed to do so, because for all $k\in\N$ we can write every shearing operator $S_s^{(k)}$ of order k as shearing operator $S_{s'}^{(k+1)}$ where $s'=(s_0,\ldots,s_k,0)$. Let $t\in\R$. If the highest approximation order of $\sigma$ for $f$ in $t$ is $5$, all conditions of Theorem \ref{main}, 2. are met and so the Taylorlet transform has a decay of $\O\(a^{(r-j)(1-6\alpha)+(\alpha-1)/2}\)$ for $a\to 0$. This can be significantly faster than the decay of $\sim a^{(\alpha-1)/2}$ for $a\to 0$ which occurs for the choice of $\alpha>\frac 16.$ \\

\end{remark}

\section{Proof of the main result}

In order to prove Theorem \ref{main}, we need the following auxiliary results.

\begin{lemma} \label{offset}
Let $f\in C(\R)$ such that for all $n\in\N$ there exists a constant $c_n\in\R_+$ with
$$\sup_{t\in\R} \left| t^n\cdot f(t) \right| =c_n<\infty.$$ 
Then
$$\int_{\R\setminus[-a^\beta,a^\beta]} f(t/a)dt = \O(a^N)\quad\text{for }a\to 0$$
for all $\beta<1$ and $N\in\N$.
\end{lemma}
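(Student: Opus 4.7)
The plan is to use the rapid decay hypothesis on $f$ together with a substitution $u=t/a$ to turn the tail on $\R\setminus[-a^\beta,a^\beta]$ into a tail on $\R\setminus[-a^{\beta-1},a^{\beta-1}]$. Since $\beta<1$, the threshold $a^{\beta-1}$ tends to $+\infty$ as $a\to 0$, so the integrand lives in a region where $f$ is extremely small by assumption.

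More precisely, I would substitute $u=t/a$ (so $dt=a\,du$) to get
$$\int_{\R\setminus[-a^\beta,a^\beta]} f(t/a)\,dt \;=\; a\int_{\R\setminus[-a^{\beta-1},a^{\beta-1}]} f(u)\,du.$$
For any fixed $n\ge 2$, the hypothesis gives $|f(u)|\le c_n|u|^{-n}$ for $u\ne 0$, so the right-hand side is bounded in absolute value by
$$2a\, c_n\int_{a^{\beta-1}}^{\infty} u^{-n}\,du \;=\; \frac{2c_n}{n-1}\,a\cdot a^{(\beta-1)(1-n)} \;=\; \frac{2c_n}{n-1}\,a^{\,\beta+(1-\beta)n}.$$

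Given $N\in\N$, I would then just choose $n$ large enough that $\beta+(1-\beta)n\ge N$; this is possible because $1-\beta>0$. That bound is uniform in $a$ on any interval $(0,a_0]$, yielding the claimed $\O(a^N)$ estimate.

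There is no real obstacle here; the only things to check are that $n$ can be taken $\ge 2$ so the tail integral of $u^{-n}$ converges, and that the substitution is legitimate (which follows from continuity and integrability of $f$ guaranteed by the decay hypothesis). I would write the argument in a few lines, essentially as above.
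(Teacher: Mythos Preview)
Your proof is correct and essentially identical to the paper's: the paper applies the decay bound $|f(t/a)|\le c_n(a/t)^n$ directly in the $t$ variable and integrates, obtaining the same estimate $\frac{2c_n}{n-1}a^{(1-\beta)n+\beta}$, whereas you first substitute $u=t/a$ and then bound; the resulting exponent and the concluding ``choose $n$ large'' step match exactly.
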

\begin{proof}
By applying the decay condition, we obtain for $n>1$
$$\left|\int_{\R\setminus[-a^\beta,a^\beta]} f(t/a)dt\right| \le 2c_n \int_{a^\beta}^\infty (a/t)^n dt = \frac{2c_n}{n-1}a^{(1-\beta)n+\beta}.$$
Since we can choose $n\in\N$ arbitrarily large and since $\beta<1$, we get the desired result.
\flushright{$\Box$} \end{proof}

The next lemma provides a relation between the vanishing moments of order $n$ and the decay rate of integrals over the graph of a monomial. This will become important as the Taylorlet transform of a feasible function can be represented as a sum over integrals of this type.

\begin{lemma}\label{Radon}
Let $r,n\in\N$ and let $\tau$ be an analyzing Taylorlet of order $n$ with $r$ vanishing moments. Then for all $\ell,m\in\N$ and for all $k\in\{1,\ldots,n\}$, we have
\begin{equation}\label{int}
\int_\R \partial_1^m \tau\cvec{z\cdot t^k \\ t}t^{km+\ell}dt = \O\(|z|^{-(m+r)}\) \text{ for }z\to\pm\infty.
\end{equation}
\end{lemma}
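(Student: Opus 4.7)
The plan is to reduce to scalar integrals and then extract decay by a change of variables combined with a Taylor expansion that activates the vanishing moments. Writing $\tau = g \otimes h$, we have $\partial_1^m \tau(x_1,x_2) = g^{(m)}(x_1)\, h(x_2)$, so the integral in question equals
$$I(z) := \int_\R g^{(m)}(z t^k)\, t^{km+\ell}\, h(t)\, dt.$$
The key identity $\partial_z^m[g(z t^k)] = g^{(m)}(z t^k)\, t^{km}$ lets me rewrite $I(z) = J^{(m)}(z)$ with
$$J(z) := \int_\R g(z t^k)\, t^\ell\, h(t)\, dt,$$
where differentiation under the integral is justified by $g, h \in \S(\R)$.

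For $z > 0$, I would substitute $u = z^{1/k} t$ to obtain
$$J(z) = z^{-(\ell+1)/k}\, \Phi\!\left(z^{-1/k}\right), \qquad \Phi(w) := \int_\R g(u^k)\, u^\ell\, h(wu)\, du.$$
The function $\Phi$ is smooth in $w$, with $\Phi^{(p)}(0) = h^{(p)}(0)\int_\R g(u^k)\, u^{\ell+p}\, du$. By the vanishing moment hypothesis, $\int_\R g(u^k)\, u^M\, du = 0$ for $M \le kr-1$, so all Taylor coefficients of $\Phi$ at $0$ up to order $p_0 - 1$ vanish, where $p_0 := \max(0, kr - \ell)$. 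Expanding to order $P$ therefore yields $\Phi(w) = O(w^{p_0})$ and a remainder $R_P(w) = O(w^P)$, together with analogous bounds on the derivatives of $R_P$.

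Substituting back gives $J(z) = O\!\left(z^{-(\ell + 1 + p_0)/k}\right) = O\!\left(z^{-r - 1/k}\right)$ as $z \to \infty$, which covers both $\ell < kr$ (where $(\ell + 1 + p_0)/k = r + 1/k$) and $\ell \ge kr$ (where $(\ell+1)/k \ge r + 1/k$). To transfer this bound to $I = J^{(m)}$, I differentiate the pure-power terms directly — each $z^{-(\ell+1+p)/k}$ contributes $z^{-(\ell+1+p)/k - m}$ — and handle the remainder by a routine chain-rule estimate: every $z$-derivative of $z^{-(\ell+1)/k}\, R_P(z^{-1/k})$ gains exactly one factor of $z^{-1}$, since $\frac{d}{dz}z^{-1/k}$ is of order $z^{-1/k-1}$ and pairs with $R_P^{(j)}(z^{-1/k}) = O(z^{-(P-j)/k})$. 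Taking $P \ge kr$ therefore yields $I(z) = O\!\left(z^{-r - m - 1/k}\right)$, which is the claimed bound $O(z^{-(m+r)})$.

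The case $z \to -\infty$ is entirely analogous: the substitution $u = |z|^{1/k} t$ produces $g(-u^k)$ in place of $g(u^k)$, and the $\pm$ form of the vanishing moment condition guarantees $\int_\R g(-u^k)\, u^M\, du = 0$ for $M \le kr - 1$, so the same Taylor-and-remainder argument applies verbatim. I expect the main technical nuisance to be the chain-rule bookkeeping for the Taylor remainder under $m$ differentiations; once one checks that each $z$-derivative of $z^{-\alpha}R_P(z^{-1/k})$ costs only one factor of $z^{-1}$, the conclusion is an immediate consequence of the vanishing moment structure.
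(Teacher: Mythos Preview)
Your argument is correct and takes a genuinely different route from the paper's proof. The paper works on the Fourier side: it sets $\tilde\tau_k(z,\omega)=\int_\R \tau(zt^k,t)e^{-it\omega}\,dt$, rewrites the integral as $i^\ell\partial_\omega^\ell\partial_z^m\tilde\tau_k(z,0)$, and then expresses $\tilde\tau_k$ as the convolution $|z|^{-1/k}\widehat{g_{\sgn(z),k}}(\cdot/|z|^{1/k})*\hat h$; the vanishing moments become a zero of order $kr$ of $\widehat{g_{\pm,k}}$ at the origin, which is then fed directly into a pointwise estimate of the convolution integral. Your proof stays entirely in the time domain: the substitution $u=|z|^{1/k}t$ and the Taylor expansion of $h(wu)$ at $w=0$ convert the vanishing moment condition into the vanishing of the first $kr-\ell$ Taylor coefficients of $\Phi$, and the remainder bookkeeping under $m$ differentiations is exactly what you sketch. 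Your approach is more elementary and self-contained (no Fourier machinery, and the identity $I=J^{(m)}$ cleanly absorbs the $\partial_1^m$ and the factor $t^{km}$); the paper's approach is more conceptual in that it makes the ``vanishing moments $\Leftrightarrow$ order of zero of $\hat g_{\pm,k}$'' correspondence explicit and handles all $\ell,m$ in one convolution estimate without a separate remainder analysis. Both yield the slightly sharper bound $O(|z|^{-(m+r)-1/k})$.
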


\begin{proof}
The idea is to represent the integral in (\ref{int}) as a Fourier transform, to utilize the separation approach $\tau=g\otimes h$ and to show the decay result via the Fourier transforms of $g$ and $h$. We define the function
$$\tilde\tau_k(z,\omega):= \int_\R \tau\cvec{z\cdot t^k \\ t} e^{-it\omega} dt.$$
Then we can rewrite the left side of (\ref{int}) into
\begin{equation}\label{fourier}
\int_\R \partial_1^m \tau\cvec{z\cdot t^k \\ t}t^{km+\ell}dt = i^\ell\partial_\omega^\ell\partial_z^m\tilde\tau_k(z,0).
\end{equation}
For $k\in\N$ we introduce the function
$$g_{\pm,k}:\R\to\R,\quad  t\mapsto g(\pm t^k).$$
Due to the vanishing moment property we can conclude that
$$\widehat{g_{\pm,k}}^{(\nu)}(0)=0 \text{ for all }\nu\in\{0,\ldots,kr-1\}.$$
Consequently, we get the decay rate 
\begin{equation}\label{decay}
\widehat{g_{\pm,k}}^{(\nu)}(\omega)=\O(\omega^{kr-1-\nu})\quad\text{for }\omega\to 0.
\end{equation}
We now obtain
$$\tilde\tau_k(z,\omega) = \(\frac 1{|z|^{1/k}} \(g_{\sgn(z),k}\)^\land\(\frac \cdot{|z|^{1/k}}\)*\hat h\)(\omega)$$
and hence
$$\partial_\omega^\ell\tilde\tau_k(z,0) = \frac 1{|z|^{1/k}} \int_\R \(g_{\sgn(z),k}\)^\land\(-\frac \omega{|z|^{1/k}}\) \hat h^{(\ell)}(\omega)d\omega.$$
We will now check the decay rate of $\partial_\omega^\ell\partial_z^m\tilde\tau_k(z,0)$. For this, we observe that
\begin{align*}
\partial_\omega^\ell\partial_z^m\tilde\tau_k(z,0) &= \partial_z^m\(|z|^{-1/k}\int_\R \(g_{\pm,k}\)^\land\(-\frac \omega{|z|^{1/k}}\) \hat h^{(\ell)}(\omega)d\omega\) \\
&= \sum_{\nu=0}^m c_\nu |z|^{-[m+(\nu+1)/k]}\int_\R \omega^\nu \[\(g_{\sgn(z),k}\)^\land\]^{(\nu)}\(-\frac \omega{|z|^{1/k}}\)\hat h^{(\ell)}(\omega)d\omega
\end{align*}
with $c_\nu\in\R$ for all $\nu\in\{0,\ldots,m\}$. By applying (\ref{decay}) and $h\in\S(\R)$ we estimate the terms in this equation as
\begin{align*}
& \ \left| |z|^{-[m+(\nu+1)/k]}\int_\R \omega^\nu \[\(g_{\sgn(z),k}\)^\land\]^{(\nu)}\(-\frac \omega{|z|^{1/k}}\)\hat h^{(\ell)}(\omega)d\omega \right| \\
\le &\ 2|z|^{-[m+(\nu+1)/k]}\int_0^\infty \omega^\nu\cdot \(\omega |z|^{-1/k}\)^{kr-1-\nu}\cdot\frac 1{1+\omega^{kr+1}}d\omega \\
= &\ c\cdot |z|^{-(m+r)}
\end{align*}
for some constant $c>0$.
\\ \flushright{$\Box$} \end{proof}

The next lemma's statement is essentially the same as in Theorem \ref{main}, but we restrict the choice of analyzed functions to 0-feasible functions.

\begin{lemma}\label{aux}
Let $r,n\in\N$ and let $\tau$ be an analyzing Taylorlet of order $n$ with $r$ vanishing moments. Let furthermore $t\in\R$ and let $f$ be a 0-feasible function.
\begin{enumerate}

\item Let $\alpha>0$. If $s_0\ne q(t)$, the Taylorlet transform has a decay of
$$\mathcal{T}^{(n,\alpha)}f(a,s,t)=\O\big(a^N\big)\quad\text{for }a\to 0$$
for all $N>0$.

\item Let $\alpha<\frac 1n$ and let $k\in\{0,\ldots,n-1\}$ be the highest approximation order of $s$ for $f$ in $t$. Then the Taylorlet transform has the decay property
$$\mathcal{T}^{(n,\alpha)}f(a,s,t)=\O\(a^{(\alpha-1)/2+r[1-(k+1)\alpha]}\)\quad\text{for }a\to 0.$$

\item Let $\alpha>\frac 1{n+1}$, let $f$ be a $(j,n+1)-$feasible function and let $\tau$ be restrictive. If $n$ is the highest approximation order of $s$ for $f$ in $t$, then the Taylorlet transform has the decay property
$$\mathcal{T}^{(n,\alpha)}f(a,s,t)\sim a^{(\alpha-1)/2}\quad\text{for }a\to 0.$$
\end{enumerate}
\end{lemma}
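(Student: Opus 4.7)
For a $0$-feasible $f(x) = \delta(x_1 - q(x_2))$, the pairing $\Langle f, \tau^{(n,\alpha)}_{a,s,t}\Rangle$ collapses to a one-dimensional integral along the singularity curve. Using $\tau = g \otimes h$ and substituting $u = (x_2 - t)/a^\alpha$, one obtains
$$\mathcal{T}^{(n,\alpha)}f(a,s,t) = a^{(\alpha-1)/2} \int_\R g(W(u,a))\, h(u)\, du,$$
where, after Taylor-expanding $q$ around $t$ to sufficiently high order $L$ with remainder $R_L$,
$$W(u,a) = \sum_{\ell=0}^{L} \frac{q^{(\ell)}(t) - s_\ell}{\ell!}\, a^{\alpha \ell - 1}\, u^\ell + \frac{R_L(a^\alpha u)}{a}$$
(with the convention $s_\ell := 0$ for $\ell > n$). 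All three parts of the lemma follow from a careful analysis of $W$ combined with Lemma \ref{Radon}.

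\textbf{Parts 1 and 3 (the two endpoints).} In Part 1, $s_0 \ne q(t)$ makes $W(u,a)$ dominated by $(q(t) - s_0)/a$ on $|u| \le a^{-\gamma}$ for any $0 < \gamma < \alpha$. On this range the Schwartz decay of $g$ yields $|g(W(u,a))| = \O(a^N)$ for all $N$, and on $|u| > a^{-\gamma}$ the Schwartz tail of $h$ (or equivalently Lemma \ref{offset}) absorbs the complementary region. In Part 3, the highest approximation order being $n$ kills all summands with $\ell \le n$; the hypothesis $\alpha > 1/(n+1)$ then forces every remaining term to satisfy $a^{\alpha \ell - 1} \to 0$, so $W(u,a) \to 0$ pointwise in $u$. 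Since the integrand is bounded by $\|g\|_\infty\, |h(u)|$, dominated convergence gives
$$\int_\R g(W(u,a))\, h(u)\, du \;\longrightarrow\; g(0) \int_\R h(u)\, du \;\ne\; 0,$$
where non-vanishing comes from the restrictiveness of $\tau$ (the condition $I_+^0 g(0) = g(0) \ne 0$ together with $\int h \ne 0$).

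\textbf{Part 2 (the main case).} Here the terms of $W$ with $\ell \le k$ vanish, the $\ell = k+1$ term is a nonzero monomial $z\, u^{k+1}$ with
$$z := \frac{q^{(k+1)}(t) - s_{k+1}}{(k+1)!}\, a^{\alpha(k+1)-1},$$
and the remaining terms form a function $V(u,a)$. Because $\alpha < 1/n \le 1/(k+1)$, one has $|z| \to \infty$. Taylor-expanding $g$ about $z u^{k+1}$,
$$g(z u^{k+1} + V) = \sum_{m=0}^{M-1} \frac{g^{(m)}(z u^{k+1})}{m!}\, V^m + \mathrm{rem}_M,$$
each power $V^m$ consists of monomials in $u$ of degree at least $(k+2)m = (k+1)m + m$; writing such a degree as $(k+1)m + \ell$ with $\ell \ge m \ge 0$, Lemma \ref{Radon} (applied with its $k$ replaced by $k+1$) yields $\O(|z|^{-(m+r)}) = \O(a^{(m+r)(1 - (k+1)\alpha)})$ per integrated term. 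Multiplying by the $a$-prefactor $\O(a^{m(\alpha(k+2)-1)})$ carried by $V^m$, the $m$-th contribution is $\O(a^{r(1-(k+1)\alpha) + m\alpha})$; the case $m = 0$ reproduces the claimed rate, and higher $m$ are smaller by factors of $a^\alpha$.

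\textbf{Main obstacle.} The principal technical burden is the book-keeping in Part 2: one must verify that every monomial produced by expanding $V^m$ satisfies the admissibility condition of Lemma \ref{Radon}, and that the accumulated contribution of all correction terms together with the Taylor remainder $\mathrm{rem}_M$ stays strictly below the leading decay rate. A secondary but genuine point is that the Taylor expansion of $q$ is only reliable on a bounded $u$-range, so one restricts $u$ to a polynomially-thin window around the origin and absorbs the outer tail via the Schwartz decay of $h$ and Lemma \ref{offset}.
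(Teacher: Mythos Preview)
Your proposal is correct and follows essentially the same strategy as the paper's proof: localize via the Schwartz tail of $h$ (Lemma \ref{offset}), Taylor-expand both $q$ and $g$ to reduce to monomial curve integrals, and invoke Lemma \ref{Radon} for the decay in Part 2. Your treatment of Part 3 via direct dominated convergence is slightly cleaner than the paper's, which routes through the full expansion machinery of Part 2 before isolating the leading term $g(0)\int h$; otherwise the arguments coincide up to the order in which the substitution $u=(x_2-t)/a^\alpha$ is performed (you substitute first, the paper last).
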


\begin{proof} We restrict ourselves to the case $t=0$ as every other case results from a shift. Furthermore, we note that $f\in \S'(\R^2)$. SInce, the Taylorlet transform $\mathcal{T}^{(n,\alpha)}f(a,s,t)=a^{(1+\alpha)/2}\Langle \tau_{a,s,0}, f\Rangle$ is well defined. \\[3mm]
{\it 1.} The idea is to exploit the special form of $f$ in order to simplify its Taylorlet transform and to use the Schwartz class decay condition of $\tau$ in order to estimate the integral.

The structure of $f$ leads to the following form of the Taylorlet transform:
\begin{align} \label{Taylorlet}
\mathcal{T}^{(n,\alpha)} f(a,s,0) &= \int_{\R^2}\delta(x_1-q(x_2))\tau_{a,s,0}(x)dx \nonumber \\
&= a^{-(1+\alpha)/2} \int_\R \tau \cvec{\[q(x_2)-\sum_{\ell=0}^n \frac{s_\ell}{\ell !}\cdot  x_2^\ell\]/a \\ x_2/a^\alpha}dx_2 \\
&= a^{-(1+\alpha)/2} \int_\R g \(\tilde q(x_2)/a \) h( x_2/a^\alpha)dx_2 \nonumber,
\end{align}
where $\tilde q(x_2)=q(x_2)-\sum_{k=0}^n \frac{s_\ell}{\ell !}\cdot  x_2^k$. Since $g,h\in\S(\R)$, the integrand in the last row fulfills the necessary decay condition of Lemma \ref{offset}. By applying this lemma, we can conclude that
$$\left|\int_{\R\setminus[-a^\beta,a^\beta]}g \(\tilde q(x_2)/a \) h( x_2/a^\alpha)dx_2 \right| \le \|g\|_{L^\infty}\cdot \left|\int_{\R\setminus[-a^\beta,a^\beta]}h( x_2/a^\alpha)dx_2 \right| \stackrel{\text{Lemma \ref{offset}}}{=}\O(a^N)\quad\text{for }a\to 0$$
for all $N\in\N$ if $\beta<\alpha$. Hence,
$$\mathcal{T}^{(n,\alpha)} f(a,s,0) = a^{-(1+\alpha)/2} \int_{-a^\beta}^{a^\beta} g \(\tilde q(x_2)/a \) h( x_2/a^\alpha)dx_2 + \O(a^N)\quad\text{for }a\to 0$$
for all $N>0$. Due to the conditions of this lemma, $\tilde q\in C(\R)$ and $\tilde q(0)\ne 0$. Hence, there exists an $\e>0$ such that $d:=\min_{x_2\in[-\e,\e]}|\tilde q(x_2)| > 0.$ By employing the boundedness of $h$ and the Schwartz decay condition $\sup_{x_2\in\R}|x_2^M\cdot g(x_2)|=c_M<\infty$, we get
\begin{align*}
|\mathcal{T}^{(n,\alpha)} f(a,s,0)| &\le c_M a^{-(1+\alpha)/2} \int_{-a^\beta}^{a^\beta} \(\frac a{|\tilde q(x_2)|}\)^M dx_2 \\
&\le 2c_M d^{-M}  a^{M+\beta-(1+\alpha)/2}.
\end{align*}
Since we can choose $M$ to be arbitrarily large, the result follows immediately. \\[3mm]
{\it 2.} The general idea of this proof is to represent the Taylorlet transform as a sum of integrals of the form (\ref{int}) and to apply Lemma \ref{Radon} in order to obtain the desired decay rate. To this end, we will divide the proof into four steps. \\[3mm]
{\sc Step 1}

In the first step we will show that the Taylorlet transform $\mathcal{T}^{(n,\alpha)}f(a,s,0)$ is an integral over a curve and we will prove that only a small neighborhood of the origin is relevant for the decay of the Taylorlet transform for $a\to 0$.

First, we rewrite (\ref{Taylorlet}):
\begin{align}\label{basic}
a^{(1+\alpha)/2}\mathcal{T}^{(n,\alpha)} f(a,s,0) &= \int_\R \tau\cvec{\[q(x_2)-\sum_{\ell=0}^n \frac{s_\ell}{\ell !}\cdot  x_2^\ell\]/a \\ x_2/a^\alpha} dx_2 \nonumber \\
&= \int_\R \tau\cvec{\tilde q(x_2) \cdot x_2^{k+1}/a \\ x_2/a^\alpha} dx_2,
\end{align}
where $\tilde q(x_2)=\begin{cases} 
x_2^{-(k+1)}\[q(x_2)-\sum_{\ell=0}^n \frac{s_\ell}{\ell !}\cdot  x_2^\ell\]  & \text{for }x_2 \ne 0, \\
\frac 1{(k+1)!}\cdot\(q^{(k+1)}(0) - s_{k+1}\) & \text{for }x_2 = 0.
\end{cases}$ \\[2mm]
Since $k$ is the highest approximation order of $s$ for $f$ in $t=0$, we have $s_{k+1}\ne q^{(k+1)}(0)$. Hence, $\tilde q(0)\ne 0$. Furthermore, we have $\tilde q\in C^\infty(\R)$ due to the prerequisites. In order to show that just a small neighborhood of the origin is responsible for the decay of the Taylorlet transform for $a\to 0$, we observe that the integrand in (\ref{basic}) fulfills the decay condition of Lemma \ref{offset}. By applying this lemma, we obtain for $\beta\in\(0,\frac 1{k+1}\)$ and an arbitrary $N\in\N$ that
\begin{equation}\label{trunc}\big|a^{(1+\alpha)/2}\mathcal{T}^{(n,\alpha)} f(a,s,0)\big| = \left|\int_{-a^\beta}^{a^\beta} \tau\cvec{\tilde q(x_2) \cdot x_2^{k+1}/a \\ x_2/a^\alpha} dx_2\right| +\O(a^N)\quad\text{for }a\to 0.\end{equation}
\newpage
\noindent{\sc Step 2}

If we replaced the term $\tilde q(x_2)$ in the argument of the integrand by some constant $c\ne 0$, the integral would be a truncated version of the desired form (\ref{int}). Hence, we could apply Lemma \ref{Radon} to obtain an estimate for the decay of the Taylorlet transform. In order to get closer to this form, we will approximate the integrand by a Taylor polynomial in this step.

Now we expand the integrand of (\ref{trunc}) into a Taylor series \dtt wrt to the first component in a neighborhood of the point $\tilde q(0)\cdot x_2^{k+1}/a$.
\begin{align}\label{Rest}
\big|a^{(1+\alpha)/2}\mathcal{T}^{(n,\alpha)} f(a,s,0)\big| &= \left|\int_{-a^\beta}^{a^\beta} \tau\cvec{\tilde q(x_2) \cdot x_2^{k+1}/a \\ x_2/a^\alpha} dx_2\right| +\O(a^N) \nonumber \\
& \le \sum_{m=0}^M \left|\int_{-a^\beta}^{a^\beta} \partial_1^m\tau\cvec{\tilde q(0)\cdot x_2^{k+1}/a \\ x_2/a^\alpha}\cdot \(\frac{x_2^{k+1}}a\)^m\cdot \frac{\[\tilde q(x_2)-\tilde q(0)\]^m}{m!} dx_2\right| \nonumber \\
&\qquad + c^{M+1}\int_{-a^\beta}^{a^\beta}\(\frac{|x_2|^{k+1}}a\)^{M+1}\cdot |x_2|^{M+1}dx_2 +\O(a^N).
\end{align}
For the last estimate we used that $|\tilde q(x_2)-\tilde q(0)|\le c|x_2|$ for all $x_2\in\R$. We now prove that it is possible to choose $M\in\N$ such that the rest term (\ref{Rest}) behaves like $\O(a^N)$ for $a\to 0$ for an arbitrary, but fixed $N\in\N$. We obtain
\begin{equation}\label{rate}
\int_{-a^\beta}^{a^\beta}\(\frac{|x_2|^{k+1}}a\)^{M+1}\cdot |x_2|^{M+1}dx_2 \sim a^{(M+1)(\beta(k+2)-1)+\beta} \quad\text{for }a\to 0.
\end{equation}
By restricting the choice of $\beta\in\(0,\frac 1{k+1}\)$ to $\beta\in\(\frac 1{k+2},\frac 1{k+1}\)$, we obtain the desired decay rate of $\O(a^N)$ for
$$M=\left\lceil\frac{N-\beta}{\beta(k+2)-1}\right\rceil-1.$$
{\sc Step 3}

In the third step we will expand $\tilde q$ in a Taylor series about the origin to obtain a representation of the Taylorlet transform as a sum of truncated versions of integrals of the form (\ref{int}).

We now expand the term $\tilde q(x_2)-\tilde q(0)$ about the point $x_2=0$.
\begin{align}\label{ugly}
&\big|a^{(1+\alpha)/2}\mathcal{T}^{(n,\alpha)} f(a,s,0)\big| \nonumber \\
\le &\sum_{m=0}^M \left|\int_{-a^\beta}^{a^\beta} \partial_1^m\tau\cvec{\tilde q(0)\cdot x_2^{k+1}/a \\ x_2/a^\alpha}\cdot \(\frac{x_2^{k+1}}a\)^m\cdot \frac{\[\tilde q(x_2)-\tilde q(0)\]^m}{m!} dx_2\right| +\O(a^N) \nonumber \\
\le &\sum_{m=0}^M\frac 1{m!}\left| \int_{-a^\beta}^{a^\beta} \partial_1^m\tau\cvec{\tilde q(0) \cdot x_2^{k+1}/a \\ x_2/a^\alpha} \(\frac{x_2^{k+1}}a\)^m\cdot \[ \rho(x_2) +\sum_{\ell=1}^{L_m}\frac {\tilde q^{(\ell)}(0)}{\ell!}\cdot x_2^\ell\]^m dx_2\right| +\O(a^N) \quad\text{for }a\to 0,
\end{align}
where $\rho(x_2)$ is the rest term of the Taylor series expansion with the property $\rho(x_2)=\O\big(x_2^{L_m+1}\big)$ for $x_2\to0$. Now we estimate the summands for each $m\in\{0,\ldots,M\}$.

\begin{align*}
& \left| \int_{-a^\beta}^{a^\beta} \partial_1^m\tau\cvec{\tilde q(0) \cdot x_2^{k+1}/a \\ x_2/a^\alpha} \(\frac{x_2^{k+1}}a\)^m\cdot \[\rho(x_2) + \sum_{\ell=1}^{L_m} \frac {\tilde q^{(\ell)}(0)}{\ell!}\cdot x_2^\ell\]^m dx_2\right| \\
\le& \sum_{\nu=0}^m\Bigg| \int_{-a^\beta}^{a^\beta} \partial_1^m\tau\cvec{\tilde q(0) \cdot x_2^{k+1}/a \\ x_2/a^\alpha} \(\frac{x_2^{k+1}}a\)^m\cdot{{m}\choose{\nu}} \cdot [\rho(x_2)]^\nu\cdot\[\sum_{\ell=1}^{L_m} \frac {\tilde q^{(\ell)}(0)}{\ell!}\cdot x_2^\ell\]^{m-\nu} dx_2\Bigg|.
\end{align*}
Since $\tau\in\S(\R^2)$, $\rho(x_2)=\O(x_2^{L_m+1})$ for $x_2\to 0$ and $\sum_{\ell=1}^{L_m} \frac {\tilde q^{(\ell)}(0)}{\ell!}\cdot x_2^\ell =\O(x_2)$ for $x_2\to 0$, for every $\nu\in\{1,\ldots,m\}$ there exist $c_\nu,a_0>0$ such that for all $a>a_0$
\begin{align*}
&\ \Bigg| \int_{-a^\beta}^{a^\beta} \partial_1^m\tau\cvec{\tilde q(0) \cdot x_2^{k+1}/a \\ x_2/a^\alpha} \(\frac{x_2^{k+1}}a\)^m\cdot{{m}\choose{\nu}} \cdot [\rho(x_2)]^\nu\cdot\[\sum_{\ell=1}^{L_m} \frac {\tilde q^{(\ell)}(0)}{\ell!}\cdot x_2^\ell\]^{m-\nu} dx_2\Bigg| \\
\le &\ c_\nu\cdot a^{-m} \Bigg| \int_{-a^\beta}^{a^\beta} |x_2|^{(k+1)m}|x_2|^{(L_m+1)\nu}\cdot|x_2|^{m-\nu}  dx_2\Bigg| \\
= &\ \O\big( a^{[(k+2)\beta-1]m+\beta(L_m+2)} \big) \quad\text{for }a\to 0.
\end{align*}
By comparing this decay rate to (\ref{rate}) which is equal to $\O(a^N)$ and considering that $\beta>(k+2)\beta -1$, we see that the choice $L_m=M-m$ is sufficient for the decay rate of $\O(a^N)$. Hence, for all $m\in\{0,\ldots,M\}$ we have
\begin{align*}
& \ \left| \int_{-a^\beta}^{a^\beta} \partial_1^m\tau\cvec{\tilde q(0) \cdot x_2^{k+1}/a \\ x_2/a^\alpha} \(\frac{x_2^{k+1}}a\)^m\cdot \[\rho(x_2) + \sum_{\ell=1}^{L_m} \frac {\tilde q^{(\ell)}(0)}{\ell!}\cdot x_2^\ell\]^m dx_2\right| \\
\le& \ \Bigg| \int_{-a^\beta}^{a^\beta} \partial_1^m\tau\cvec{\tilde q(0) \cdot x_2^{k+1}/a \\ x_2/a^\alpha} \(\frac{x_2^{k+1}}a\)^m \cdot\[\sum_{\ell=1}^{L_m} \frac {\tilde q^{(\ell)}(0)}{\ell!}\cdot x_2^\ell\]^{m} dx_2\Bigg| +\O(a^N) \quad\text{for }a\to 0.
\end{align*}
By inserting this result into (\ref{ugly}), we get
\begin{align*}
 &\ a^{(1+\alpha)/2}\mathcal{T}^{(n,\alpha)} f(a,s,0) \\
 = &\ \sum_{m=0}^M a^{-m}\sum_{\ell=m}^{(M-m)m} c_{\ell,m}\int_{-a^\beta}^{a^\beta}\partial_1^m\tau\cvec{\tilde q(0) \cdot x_2^{k+1}/a \\ x_2/a^\alpha}\cdot x_2^{(k+1)m+\ell} dx_2 + \O(a^N) \quad\text{for }a\to 0
\end{align*}
for appropriate constants $c_{\ell,m}\in\R$.\\[3mm]
{\sc Step 4}

In this final step, we extend the integration limits to $\pm\infty$ and apply Lemma \ref{Radon} to estimate the decay of the Taylorlet transform.

Applying Lemma \ref{offset} again, we can change back the integration limits to $\pm\infty$ by only adding another $\O(a^N)$-term. Furthermore, we substitute $x_2=a^\alpha v$ and obtain
\begin{align}\label{summary}
& \mathcal{T}^{(n,\alpha)} f(a,s,0) \nonumber \\
= & a^{(\alpha-1)/2} \sum_{m=0}^M a^{-m}\sum_{\ell=m}^{(M-m)m} c_{\ell,m}\int_\R\partial_1^m\tau\cvec{\tilde q(0) \cdot a^{(k+1)\alpha-1}v^{k+1} \\ v}\cdot (a^\alpha v)^{(k+1)m+\ell} dv + \O(a^N).
\end{align}
Finally, we brought the Taylorlet transform into a shape that is fit for an application of Lemma \ref{Radon}. Since $\lim_{a\to 0}a^{(k+1)\alpha-1}=\infty$, we obtain
\begin{align*}
|\mathcal{T}^{(n,\alpha)} f(a,s,0)| &= \O\( a^{(\alpha-1)/2}\sum_{m=0}^M  a^{-m}\sum_{\ell=m}^{(M-m)m} c_{\ell,m}\cdot a^{\alpha\ell}\cdot a^{\alpha(k+1)m}\cdot a^{(r+m)(1-(k+1)\alpha)}\) \\
&=\O\( \sum_{m=0}^M\sum_{\ell=m}^{(M-m)m} c_{\ell,m}\cdot a^{(\alpha-1)/2 +\alpha\ell+(1-(k+1)\alpha)r} \) \\
&=\O\(a^{(1-(k+1)\alpha)r+(\alpha-1)/2}\)\quad\text{for }a\to 0.
\end{align*}
{\it 3.} For this case we use the same argumentations as in the case {\it 2.} to obtain (\ref{summary}) with the choices of $k=n$ and
$$\tilde q(x_2) = \begin{cases}
x_2^{-(n+1)}\cdot\[q(x_2)-\sum_{\ell=0}^n \frac{s_\ell}{\ell !}\cdot  x_2^\ell\], & \text{for } x_2\ne 0, \\
\frac 1{(n+1)!}\cdot q^{(n+1)}(0), & \text{for } x_2 = 0.
\end{cases}$$
In spite of the similarities there is a major difference in the situations, namely that 
$$\lim_{a\to 0} a^{(n+1)\alpha-1}=0.$$
Hence, we obtain for the integrals in (\ref{summary}) that
\begin{align}\label{rest}
& \lim_{a\to 0} \int_\R \partial_1^m \tau \cvec{\tilde q(0) a^{(n+1)\alpha-1}u^{n+1} \\ u} u^{(n+1)m+\ell} du \nonumber \\
=& \int_\R \lim_{a\to 0} g^{(m)}\big( \tilde q(0) a^{(n+1)\alpha-1}u^{n+1} \big) h(u) u^{(n+1)m+\ell} du \nonumber \\
=& g^{(m)}(0) \int_\R h(u)u^{(n+1)m+\ell} du.
\end{align}
We now focus on the powers of $a$ appearing in the summands of (\ref{summary}). For the indices $\ell$ and $m$ of the double sum's summands in (\ref{summary}) we obtain that
\begin{align*}
S_{\ell,m}(a) & :=a^{(\alpha-1)/2-m} \int_\R\partial_1^m\tau\cvec{\tilde q(0) \cdot a^{(k+1)\alpha-1}v^{k+1} \\ v}\cdot (a^\alpha v)^{(k+1)m+\ell} dv \\
&= \O\( a^{(\alpha-1)/2+[(n+1)\alpha-1]m+\ell\alpha}\)\quad \text{for }a\to 0
\end{align*}
for all $m\in\{0,\ldots,M\}$ and $\ell\in\{m,\ldots,(M-m)m\}$. Due to the restrictiveness $g(0)\ne 0$ and $\int_\R h(u)du\ne 0$. Hence, we obtain with (\ref{rest}) that
$$S_{0,0}(a) \sim a^{(\alpha-1)/2}\cdot g(0)\cdot\int_\R h(u)du \sim a^{(\alpha-1)/2} \quad\text{for }a\to 0.$$
Since $(n+1)\alpha-1>0$, $S_{0,0}$ is the slowest decaying summand. Thus,
$$\mathcal{T}^{(n,\alpha)} f(a,s,0) \sim a^{(\alpha-1)/2}\quad\text{for }a\to 0.$$
\\ \flushright{$\Box$} \end{proof}

With this lemma we are now able to prove Theorem \ref{main}.

\begin{proof}[of Theorem \ref{main}] ~

The proof strategy is to reduce the case $f(x)=I_\pm^j \delta(x_1-q(x_2))$ to the case $f(x)= \delta(x_1-q(x_2))$ of Lemma \ref{aux} by partial integration and to show that the resulting iterated integral $I_\pm^j\tau$ of the Taylorlet $\tau$ is a Taylorlet as well.

First, we note that $f = (I^j_\pm \delta) (\cdot_1-q(\cdot_2))$ is a tempered distribution for all $j\in\N$. Let $a>0,s\in \R^{n+1},t\in\R$. Then the Taylorlet transform $\mathcal{T}^{(n,\alpha)}f(a,s,t)$ is well defined. By partial integration we obtain for $j\ge 1$
\begin{align*}
a^{(1+\alpha)/2}\cdot\mathcal{T}^{(n,\alpha)}f(a,s,t) &= \Langle \tau_{a,s,0}, f\Rangle \\
&= \int_\R \[a\cdot I_{x_1,\pm}\tau_{a,s,0}(x)\cdot (I_\pm^j\delta)(x_1-q(x_2))\]_{x_1=-\infty}^{x_1=+\infty}dx_2 \\
&\quad +\int_{\R^2}a\cdot I_{x_1,\mp}\tau_{a,s,0}(x)\cdot (I_\pm^{j-1}\delta)(x_1-q(x_2))dx.
\end{align*}
We now show that the first term disappears. For this we note that $I_\pm^j \delta(x)$ exhibits only polynomial growth as $|x|\to\infty$. Furthermore, with $\tau= g\otimes h$, only $g$ is altered by the operator $I_{x_1,\pm}$ while $h$ remains the same. Hence, we show that $I_\pm^j g\in\S(\R)$ for all $j<r$. By applying a Fourier transform to the function, we obtain
$$\(I_\pm^j g\)^\land(\omega) = \frac{\hat g(\omega)}{(\pm i \omega)^j}.$$
Since $g$ has $r$ vanishing moments and $g\in\S(\R)$, we have $\hat g(\omega)=\O(\omega^r)$ for $\omega\to 0$. Consequently, $\frac{\hat g(\omega)}{(\pm i \omega)^j}\in\S(\R)$ and hence also $I_\pm^j g\in\S(\R).$ We thus obtain
$$a^{(1+\alpha)/2}\cdot\mathcal{T}^{(n,\alpha)}f(a,s,t) = a\cdot \int_{\R^2} I_{x_1,\mp}\tau_{a,s,0}(x)\cdot (I_\pm^{j-1}\delta)(x_1-q(x_2))dx.$$
By induction we get
$$a^{(1+\alpha)/2}\cdot\mathcal{T}^{(n,\alpha)}f(a,s,t) = a^j\cdot \Langle I_{x_1,\mp}^j\tau_{a,s,0}\ ,\ \delta(x_1-q(x_2))\Rangle.$$
This delivers an additional factor $a^j$ to the Taylorlet transform. Now we examine the vanishing moments. By applying partial integration and utilizing $I_\pm^j g\in\S(\R)$ we obtain
$$\left|\int_\R (I_\pm^j g)(\pm t^k)t^m dt\right| = \left|\int_\R g(\pm t^k) t^{kj+m}dt\right|.$$
Hence, $I_\pm^j g$ has $r-j$ vanishing moments of order $n$ and in case $2.$ with a highest approximation order of $k$ we obtain the decay rate
$$\mathcal{T}^{(n,\alpha)}f(a,s,0)=\O\(a^{j+(\alpha-1)/2+(r-j)[1-(k+1)\alpha]}\)\text{ for }a\to 0.$$

It remains to show that the restrictiveness condition of $g$ guarantees $I_\pm^j g(0)\ne 0$. For this we apply the formula for iterated integrals stating that
$$I_+^j g(u) = \int_{-\infty}^u (u-v)^{j-1} g(v) dv.$$
Hence, we obtain
$$I_+^j g(0) = (-1)^{j-1} \int_{-\infty}^0 g(v)v^{j-1} dv = \underbrace{\int_\R g(v)v^{j-1} dv}_{=0} + \ (-1)^j\underbrace{\int_0^\infty g(v)v^{j-1} dv}_{\ne 0}\ne 0.$$
The statement $I_-^j g(0)\ne 0$ can be proved similarly. Hence, for all $j<r$, the function $I_{x_1,\pm}^j\tau$ is a restrictive analyzing Taylorlet of order $n$ with $r-j$ vanishing moments, \dt ie,
$$\int_\R I_{x_1,\pm}^j\tau\cvec{0\\u}du\ne 0.$$
Consequently, with the additional factor $a^j$ we get the decay rate
$$\mathcal{T}^{(n,\alpha)}f(a,s,0)=\O\(a^{j+(\alpha-1)/2}\)\text{ for }a\to 0.$$
\\ \flushright{$\Box$} \end{proof}

\section{Numerical results}

In this section we verify the main result numerically. To this end, we implemented the Taylorlet transform in Matlab and used Taylorlets of order 2 with 3 vanishing moments. They were constructed via the General Setup in section 3 starting from the function $\phi(t)=e^{-t^2}$. Through this procedure we obtain the Taylorlet
\begin{align}\label{example}
\tau(x)&=g(x_1)\cdot h(x_2),  \intertext{where} 
g(x_1)&= \frac{64}{8!}\cdot(1+x_1)\cdot(315 - 51660x_1^2 + 286020x_1^4 - 349440x_1^6 + 142464x_1^8 - 21504x_1^{10} + 1024x_1^{12})\cdot e^{-x_1^2},\nonumber \\ 
h(x_2)&=e^{-x_2^2}, \nonumber
\end{align}
which is shown in Figure \ref{Taylorfig}. To speed up the computation time, we employed the one-dimensional adaptive Gauss-Kronrod quadrature \texttt{quadgk} for the evaluation of the integrals. In order to transfer the integrals into a scenario in which a one-dimensional integration is possible, the reduction technique from the proof of part 3. of \Cref{main} was utilized to reduce the integrals to the cases of $0-$feasible functions, \dt ie,
$$\Langle \tau_{ast}(x), \1_{\R_+}(x_1-q(x_2))\Rangle = \Langle I_{x_1,+}\tau_{ast},\delta(x_1-q(x_2)) \Rangle.$$
In this example, the antiderivative of $\tau$ \dtt wrt $x_1$ can be determined analytically by computing the antiderivative of $g$, \dt ie,
\begin{align*}
\int_{-\infty}^t g(x_1)dx_1 &= -\frac{32}{8!}\cdot e^{-t^2}\cdot \big(-9 - 630 t - 324 t^2 + 34020 t^3 + 25668 t^4 - 100800 t^5 - 86784 t^6 + 71040 t^7 \\
&\hspace{25mm} + 65664 t^8 - 15872 t^9 - 15360 t^{10} + 1024 t^{11} + 1024 t^{12}\big).
\end{align*}

\begin{figure}[p]
\begin{minipage}{.49\textwidth}
\centering
\includegraphics[width=.95\textwidth]{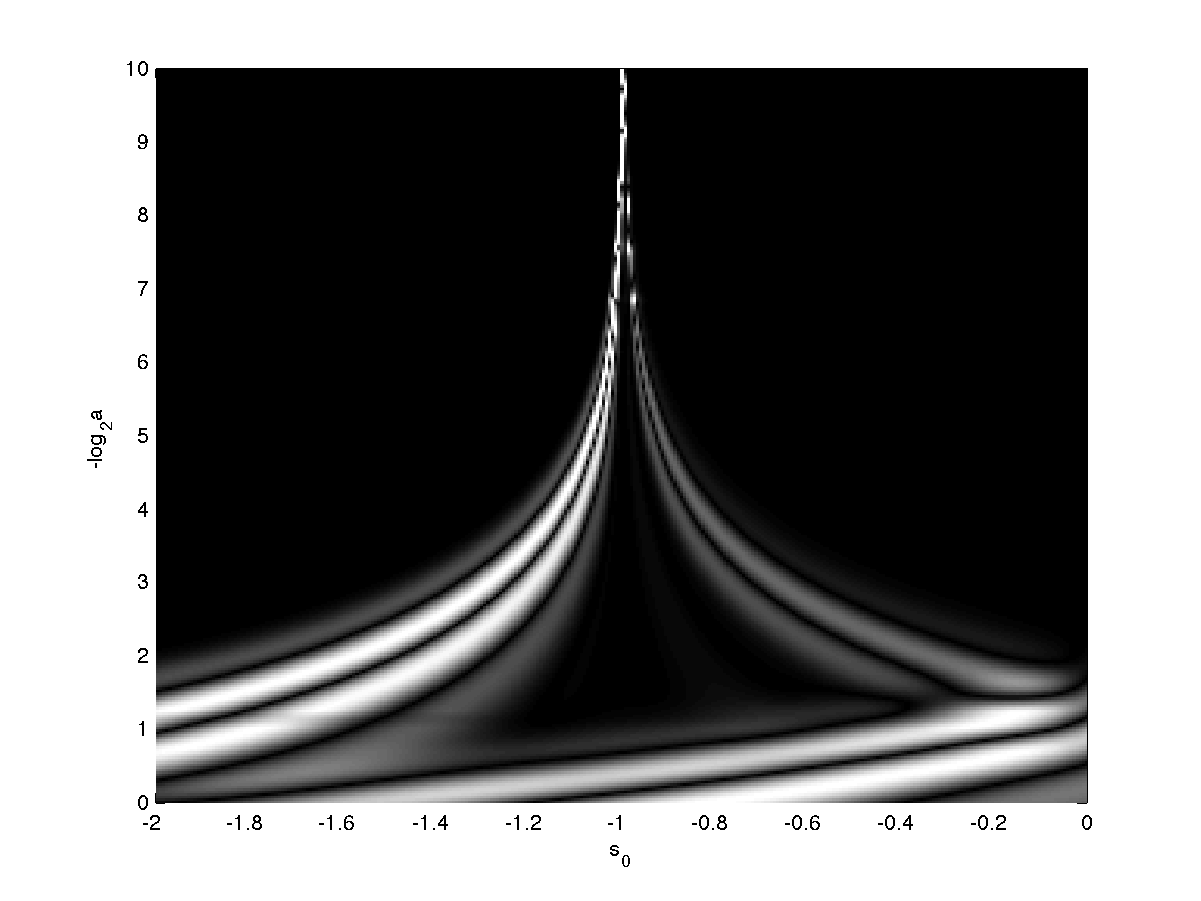}
\includegraphics[width=.95\textwidth]{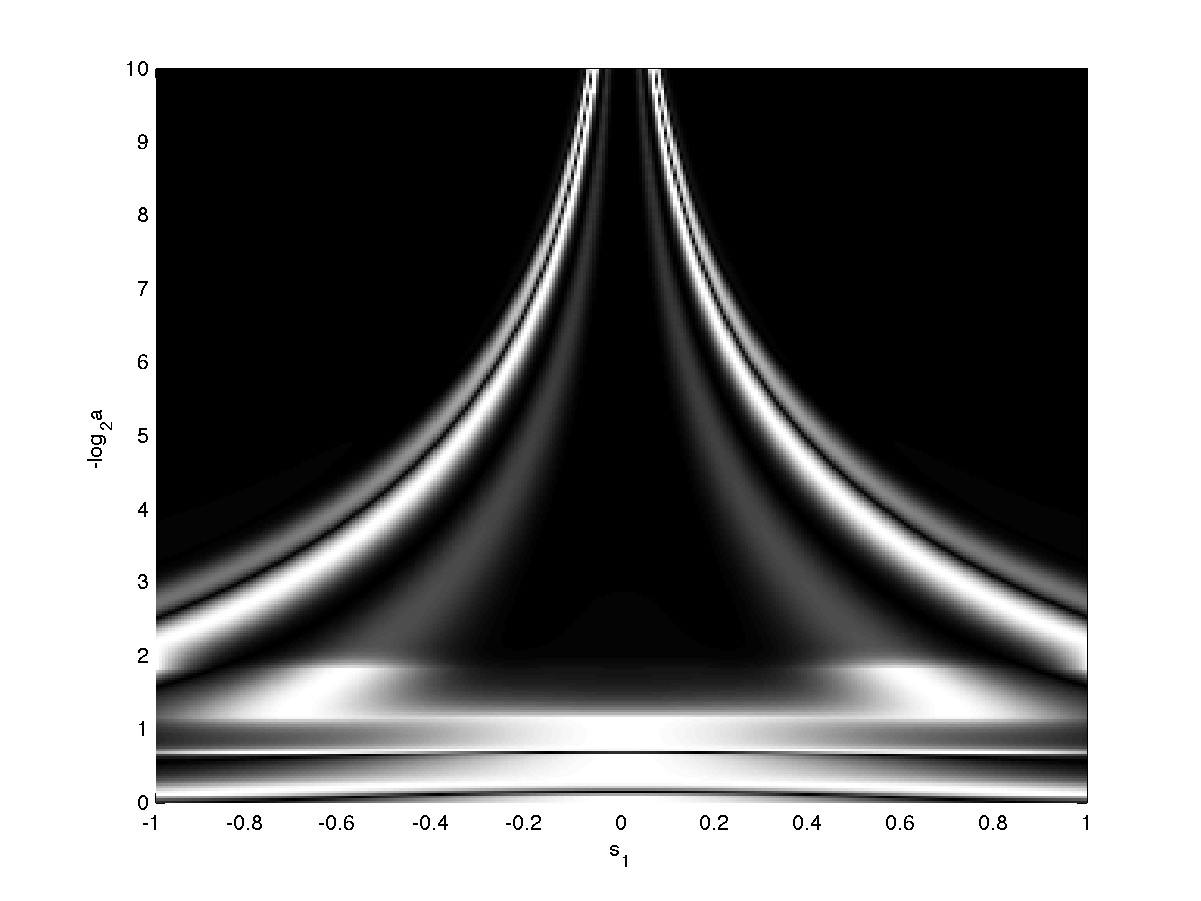}
\includegraphics[width=.95\textwidth]{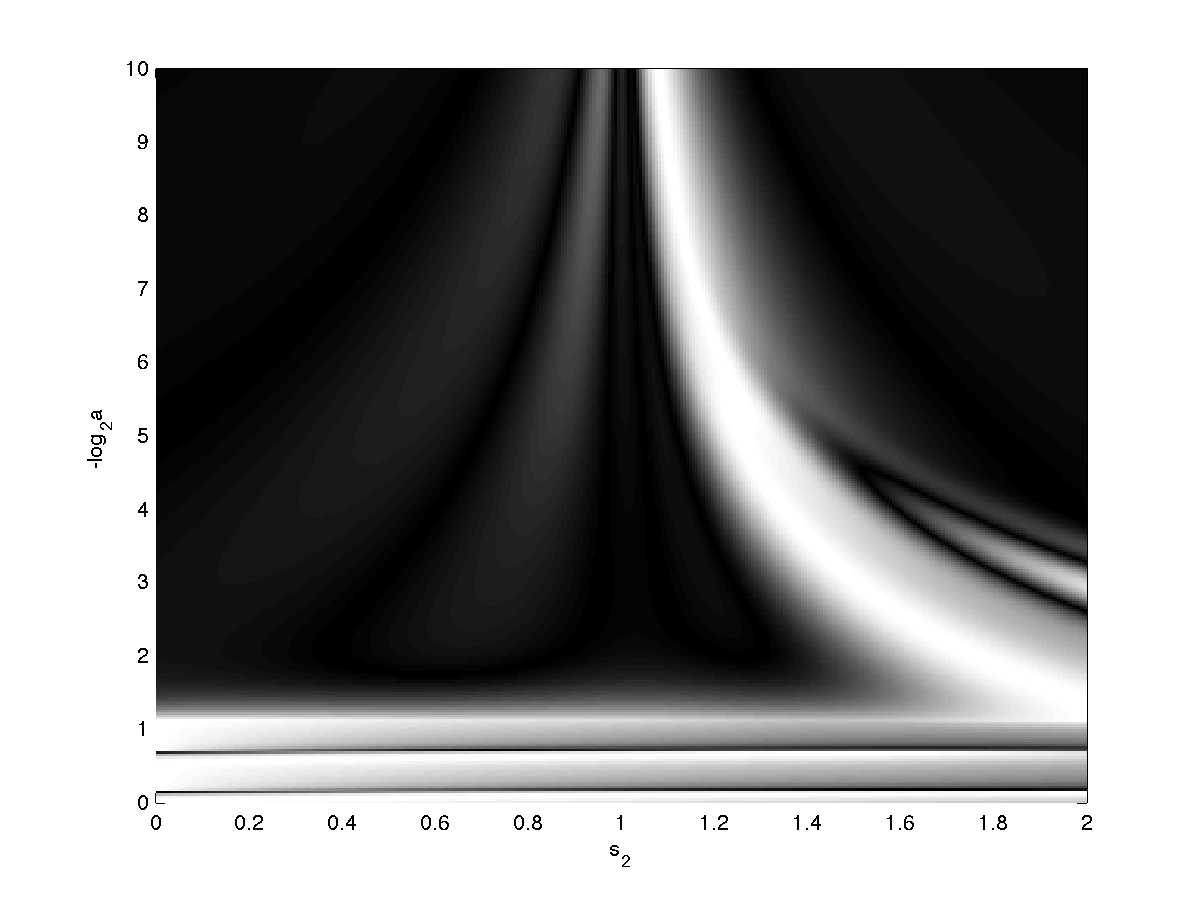}
\end{minipage}
\hfill
\begin{minipage}{.49\textwidth}
\centering
\includegraphics[width=.95\textwidth]{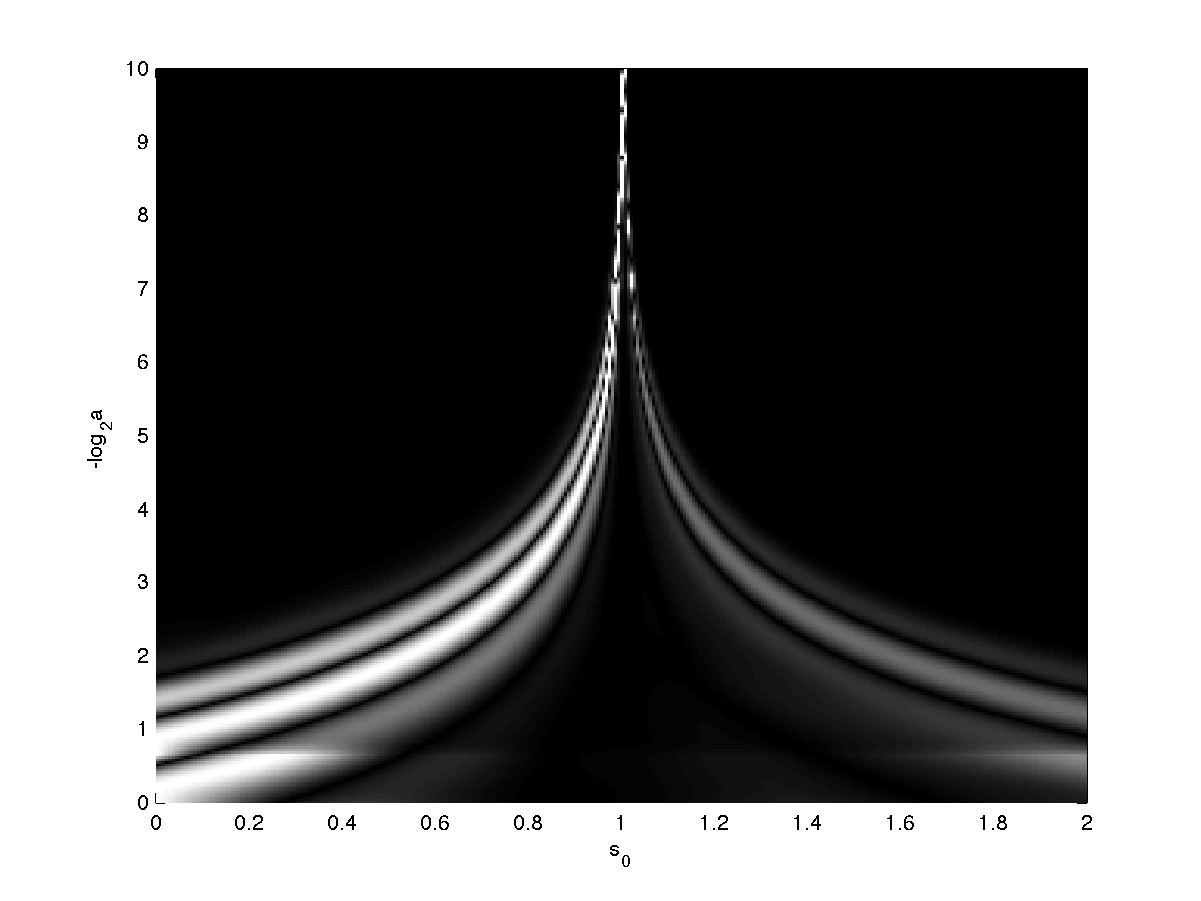}
\includegraphics[width=.95\textwidth]{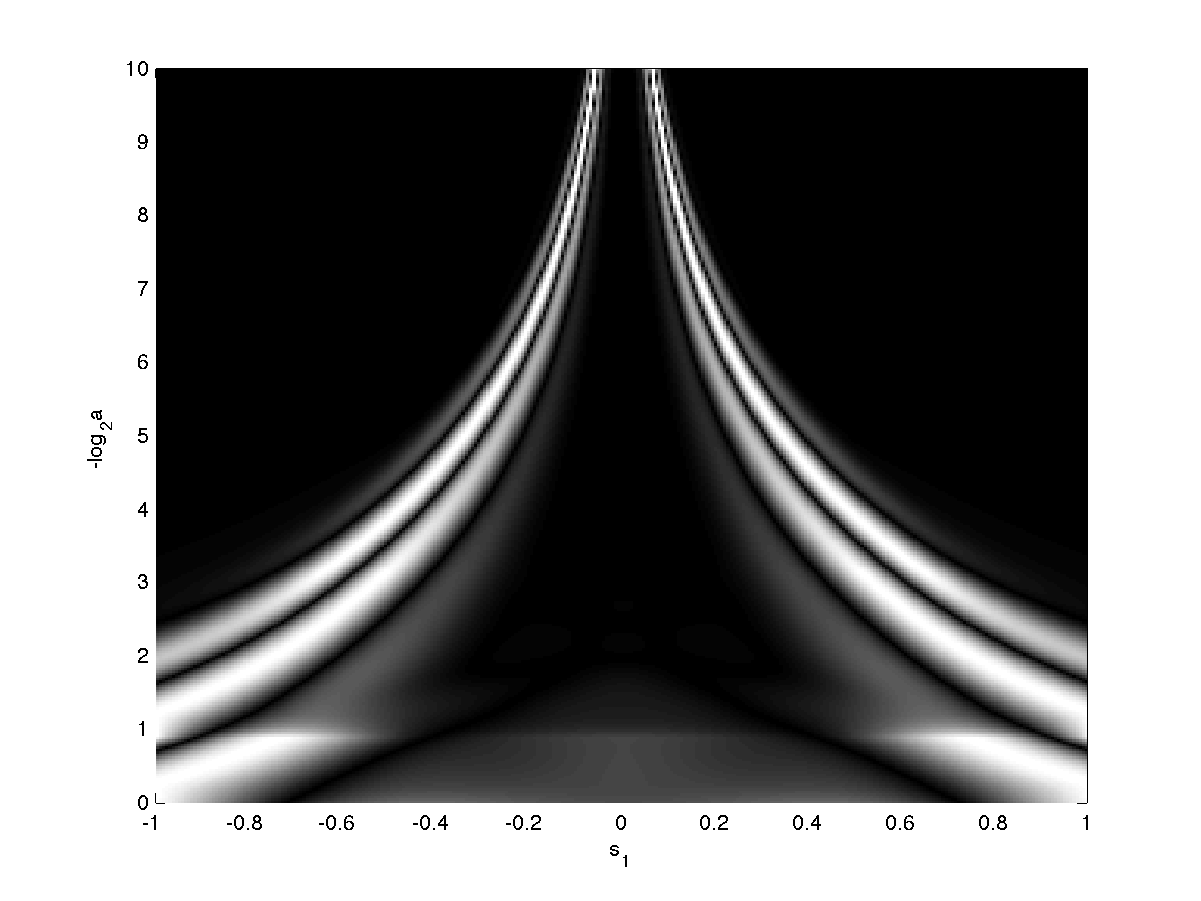}
\includegraphics[width=.95\textwidth]{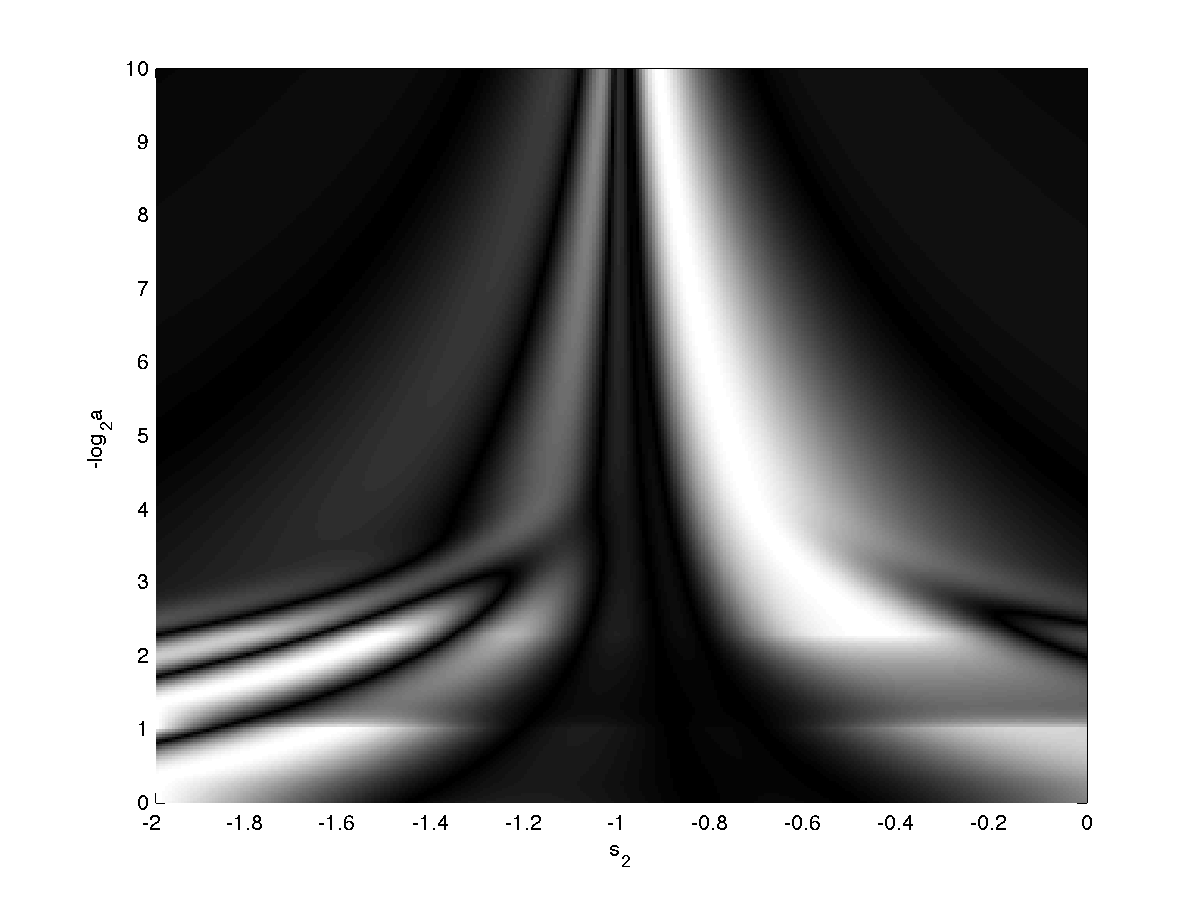}
\end{minipage}
\caption{Plots of the Taylorlet transform $\mathcal{T}f_i(a,s,0)$ for $f_1(x)=\1_{B_1}(x)$ (left column) and $f_2(x)=\1_{\R_+}(x_1-\cos x_2)$ (right column). The vertical axis shows the dilation parameter in a logarithmic scale $-\log_2a$. The horizontal axis shows the location $s_0$ (top), the slope $s_1$ (center) and the parabolic shear $s_2$ (bottom) rsp. The Taylorlet transform was computed for points $(a,s_i)$ on a $300\times 300-$grid. We can observe the paths of the local maxima \dtt wrt the respective shearing variable as they converge to the correct related geometric value through the scales. Due to the vanishing moment conditions of higher order, the local maxima display a fast convergence to the correct value. Consequently, we can obtain an approximation of $q_1(0)=-1,\dot q_1(0)=0,\ddot q_1(0)=1$ and $q_2(0)=1,\dot q_2(0)=0,\ddot q_2(0)=-1$ for the singularity functions $q_1(x_2)=-\sqrt{1-x_2^2}$ and $q_2(x_2)=\cos\(x_2\)$ of $f_1$ and $f_2$ rsp.}
\label{plot}
\end{figure}

Due to \cref{main}, the decay rate of the Taylorlet transform changes depending on the highest approximation order of the shearing variable. We can exploit this pattern in a step-by-step search for consecutive Taylor coefficients of the singularity function $q$. To this end, we first compute the Taylorlet transform of a function with varying shearing variable $s_0$ while $s_k=0$ for all $k\in\{1,\ldots,n\}$. Then we consider the propagation of the local maxima \dtt wrt $s_0$ through the scales. Since the choice $s_0=q(t)$ leads to the lowest decay rate, we can expect the local maxima near $s_0=q(t)$ to converge towards this value for decreasing scales in a similar fashion as in the method of wavelet maximum modulus by Mallat and Hwang \cite{mahw92}. Subsequently, we fix $s_0$ to the value $q(t)$ and search for the matching value of $s_1$ in the same way as in the preceding step for $s_0$. Due to the choice $s_0=q(t)$, the Taylorlet transform displays a low decay rate. Because of the vanishing moment conditions, the Taylorlet transform decays even slower, if additionally $s_1=\dot q(t)$. Hence, the method of modulus maximum is still applicable. With the same argumentation, we can repeat this procedure for all shearing variables up to the order of the Taylorlet. 

In order to better visualize the local maxima, we normalized the absolute value of the Taylorlet transform in the presented plots such that the maximum value in each scale is 1. Due to this normalization \dtt wrt the local maxima on a compact interval regarding the respective shearing variable (\dt eg the interval $[-1,1]\ni s_1$ in the center right image of \cref{plot}), discontinuities \dtt wrt the dilation parameter can appear. 

\section{Conclusion}

In order to detect higher order geometric information, we introduced the Taylorlet transform which is based on the continuous shearlet transform, and in addition to dilation, translation and classical shears utilizes shears of higher order. The transform allows for an extraction of position, orientation, curvature and other higher order geometric information of distributed singularities (\Cref{main}). A robust detection of these features can be guaranteed using the concept of vanishing moments of higher order. Additionally, we presented a constructive algorithm to build functions with the needed properties. First numerical studies showed its potential for future applications.

\section{Acknowledgements}

The author expresses his gratitude for the support by the DFG project FO 792/2-1 "Splines of complex order, fractional operators and applications in signal and image processing", awarded to Brigitte Forster.

\newpage

\bibliographystyle{alpha}
\bibliography{thesis}

\end{document}